\newtheorem{theorem}{Theorem}[section]
\newtheorem{proposition}[theorem]{Proposition}
\newtheorem{lemma}[theorem]{Lemma}
\newtheorem{corollary}[theorem]{Corollary}
\newtheorem{remark}[theorem]{Remark}
\newcommand{\Kx }{K[\x ]}
\newcommand{\zs}{\{ 0\} }
\newtheorem{problem}[theorem]{Problem}
\newcommand{\x }{{\bf x}}
\newcommand{\Rx }{R[\x ]}
\newcommand{\E }{\mathop{\rm E}\nolimits}
\newcommand{\T }{\mathop{\rm T}\nolimits}
\newcommand{\Aut }{\mathop{\rm Aut}\nolimits}
\newcommand{\Aff }{\mathop{\rm Aff}\nolimits}
\newcommand{\Add }{\mathop{\rm Add}\nolimits}
\newcommand{\BA }{\mathop{\rm BA}\nolimits}
\newcommand{\GL}{\mathop{\rm GL}\nolimits}
\newcommand{\Spec}{\mathop{\rm Spec}\nolimits}
\newcommand{\ca}{\mathcal{A}}
\newcommand{\cb}{\mathcal{B}}
\newcommand{\A}{\ensuremath{\mathbb A}}
\newcommand{\N}{\ensuremath{\mathbb N}}
\newcommand{\Z}{\ensuremath{\mathbb Z}}
\newcommand{\pri}{\ensuremath{\smallsetminus}}
\newcommand{\sm}{\setminus}
\newcommand{\al}{\ensuremath{\alpha}}
\begin{document}

\title
{Generalisations of the tame automorphisms over a domain of positive characteristic}

\author{Eric Edo\and Shigeru Kuroda}
\date{}


\maketitle

\begin{abstract}
In this paper, we introduce two generalizations of the tame subgroup of the automorphism group
of a polynomial ring over a domain of positive characteristic.
We study detailed structures of these new `tame subgroups' in the case of two variables.
\end{abstract}

\section{Introduction}

{\bf A. The Tame Generator Problem}\\

Throughout this paper, 
let $R$ be a domain of characteristic $p\geq 0$,
and $\Rx :=R[x_1,\ldots ,x_n]$
the polynomial ring in $n$ variables over $R$,
where $n\in \N $ (when $n=2$, we set $x=x_1$ and $y=x_2$).
We discuss the structure of the group
$\Aut _R\Rx $ of automorphisms
of the $R$-algebra $\Rx $.
Since each $\phi \in \Aut _R\Rx $
is uniquely determined by $\phi (x_i)$
for $i=1,\ldots ,n$,
we sometimes identify $\phi $
with the $n$-tuple $(\phi (x_1),\ldots ,\phi (x_n))$
of elements of $\Rx $.
We say that $\phi $ is {\it elementary}
if there exist $l\in\{1,\ldots,n\}$,
$a\in R^{\times }$ and
$$
f\in A_l:=R[x_1,\ldots ,x_{l-1},x_{l+1},\ldots ,x_n]
$$
such that
$\phi (x_l)=ax_l+f$ and $\phi (x_i)=x_i$ for all $i\neq l$.
Since $R$ is a domain,
$\phi $ is elementary
if and only if $\phi $ belongs to $\Aut _{A_l}\Rx $
for some $l\in\{1,\ldots,n\}$.
We say that $\phi $ is {\it affine}
if there exist $M\in \GL _n(R)$
and $(b_1,\ldots ,b_n)\in R^n$ such that
$$
(\phi (x_1),\ldots ,\phi (x_n))
=(x_1,\ldots ,x_n)M+(b_1,\ldots ,b_n).
$$
If $b_1=\cdots =b_n=0$,
we call $\phi $ a {\it linear automorphism}.
If $M$ is the identity matrix,
we call $\phi $ a {\it translation}.
We define the
{\it affine subgroup} $\Aff _n(R)$,
{\it elementary subgroup} $\E _n(R)$,
and {\it tame subgroup} $\T _n(R)$ of $\Aut _R\Rx $
to be
the subgroups of $\Aut _R\Rx $ generated by
all the affine automorphisms,
all the elementary automorphisms,
and $\Aff _n(R)\cup \E _n(R)$,
respectively.
We say that $\phi \in \Aut _R\Rx $
is {\it tame} if $\phi $ belongs to $\T _n(R)$,
and {\it wild} otherwise.

It is natural to consider the following problem.

\begin{problem}\label{problem:tgp}\rm
Does it hold that $\Aut _R\Rx =\T _n(R)$?
\end{problem}

This problem,
called the {\it Tame Generators Problem},
is one of the most famous problems
in Polynomial Ring Theory.
Since $R$ is a domain,
every element of $\Aut _RR[x_1]$
is affine and elementary.
Hence,
the equality holds for $n=1$.
When $n=2$ and $R$ is a field,
the answer to Problem~\ref{problem:tgp} is affirmative
due to Jung~\cite{Jung} and van der Kulk~\cite{Kulk}.
On the other hand,
if $n=2$ and $R$ is not a field,
then there always exists a wild automorphism due to
Nagata~\cite{Nagata}.
Nagata also conjectured that $\Aut _R\Rx \neq \T _3(R)$
if $n=3$ and $R$ is a field,
and gave a candidate for a wild automorphism.
The conjecture was recently solved in the affirmative
by Shestakov-Umirbaev~\cite{SU} in the case of $p=0$.
Problem~\ref{problem:tgp} remains open in the other cases.

Since $\Aut _R\Rx $ is not equal to $\T _n(R)$ in general,
our next interest is to find a `better'
candidate for a generating set for the group $\Aut _R\Rx $,
and generalize the notion of tame automorphisms.
The purpose of this paper is to introduce
two generalizations of tame automorphisms
in the case of $p>0$,
which are both obtained
by generalizing the notion of affine automorphisms.
We study the detailed structures of these new `tame subgroups'
when $n=2$.\\

{\bf B. The geometrically affine subgroup}\\

Recall that $f\in \Rx $ is said to be {\it additive} if
$$
f(y_1+z_1,\ldots ,y_n+z_n)
=f(y_1,\ldots ,y_n)+f(z_1,\ldots ,z_n),
$$
where $y_1,\ldots ,y_n$ and $z_1,\ldots ,z_n$
are indeterminates over $R$.
If $p=0$,
then $f$ is additive if and only if
$f$ is a linear form over $R$.
If $p>0$,
then $f$ is additive if and only if
$f$ is a $p$-{\it polynomial}, i.e.,
a polynomial of the form
$$
f=\sum _{i=0}^n
\sum _{j\geq 0}a_{i,j}x_i^{p^j}
$$
for some $a_{i,j}\in R$ for $i=1,\ldots ,n$ and $j\geq 0$. 
Let $\Add _n(R)$ be the set
of $\phi \in \Aut _R\Rx $
such that $\phi (x_1),\ldots ,\phi (x_n)$
are additive polynomials.
Then,
$\Add _n(R)$ forms a subgroup of $\Aut _R\Rx $.
Actually,
if $f,g_1,\ldots ,g_n\in \Rx $
are additive,
then $f(g_1,\ldots ,g_n)$ is also additive.
Note that $\phi \in \Aut _R\Rx $ belongs to $\Add _n(R)$
if $\phi $ is linear,
where the converse also holds when $p=0$.
If $p>0$ and $n\geq 2$, however,
$\Add _n(R)$ contains non-linear automorphisms.
The second author~\cite{Kuroda} pointed out that,
if $R$ is not a field,
then $\Add _2(R)$ is not contained in $\T _2(R)$
for any $p>0$.
He also showed that,
if $R$ is a field,
then $\Add _n(R)$ is contained in $\T _n(R)$
for any $n\geq 1$
(see also \cite{Tanaka}).

Geometrically,
$\Add _n(R)$ can be explained as follows.
Recall that $\Aut _R\Rx $
is identified with the automorphism group
of the affine $n$-space $\A ^n:=\Spec \Rx $ over $R$.
We may regard $\A ^n$ also as
the $n$-dimensional {\it vector group},
i.e.,
the affine algebraic group scheme with
the coproduct $\mu :\Rx \to \Rx \otimes _R\Rx $,
coidentity $\epsilon :\Rx \to R$,
and coinverse $\iota :\Rx \to \Rx $ defined by
$$
\mu (x_i)=x_i\otimes 1+1\otimes x_i,\quad
\epsilon (x_i)=0\quad{\rm and}\quad
\iota (x_i)=-x_i
$$
for $i=1,\ldots ,n$,
respectively.
Then,
$\Add _n(R)$ coincides with the automorphism
group of the $n$-dimensional vector group.
From this point of view,
$\Add _n(R)$ is regarded as a generalization
of the group of linear automorphisms.

The groups of all the translations
and all the linear automorphisms
of $\Rx $ over $R$
can be canonically identified with
the groups $R^n$ and $\GL _n(R)$,
respectively.
Then, we have
$\Aff _n(R)=R^n\rtimes \GL _n(R)$.
Since $\Add _n(R)$ is a generalization of $\GL _n(R)$,
we may consider
$$
\Aff ^{\rm g}_n(R):=\langle R^n, \Add _n(R)\rangle
$$
as a generalization of $\Aff _n(R)$.
Here,
for a group $G$
and subgroups $H_1,\ldots ,H_r$ of $G$,
we denote by
$\langle H_1,\ldots ,H_r\rangle $
the subgroup of $G$ generated by $H_1\cup \cdots \cup H_r$.
We call $\Aff ^{\rm g}_n(R)$
the {\it geometrically affine subgroup} of $\Aut _R\Rx $.
As in the case of the affine subgroup,
it holds that
$$
\Aff ^{\rm g}_n(R)=R^n\rtimes \Add _n(R).
$$
Actually,
for $\phi =(f_1,\ldots ,f_n)\in \Add _n(R)$ 
and $\tau =(x_1+b_1,\ldots ,x_n+b_n)\in R^n$, 
we have 
$$
\phi ^{-1}(\tau (\phi (x_i)))
=\phi ^{-1}\bigl(f_i(x_1,\ldots ,x_n)+
f_i(b_1,\ldots ,b_n)\bigr)
=x_i+f_i(b_1,\ldots ,b_n)
$$
for $i=1,\ldots ,n$. 
Hence,
$\phi \in \Aut _R\Rx $ belongs to $\Aff ^{\rm g}_n(R)$
if and only if there exists $(b_1,\ldots ,b_n)\in R^n$
such that $\phi (x_i)-b_i$ is an additive polynomial
for $i=1,\ldots ,n$.

If $p=0$,
then we have $\Aff ^{\rm g}_n(R)=\Aff _n(R)$
for any $n\geq 1$,
since $\Add _n(R)=\GL _n(R)$.
If $R$ is a field,
then $\Aff ^{\rm g}_n(R)$ is contained in $\T _n(R)$
for any $n\geq 1$ and $p\geq 0$,
since so is $\Add _n(R)$.
If $R$ is not a field and $p>0$,
then $\Aff ^{\rm g}_2(R)$ is not contained in $\T _2(R)$,
since neither is $\Add _2(R)$.
So
we define
the {\it geometrically tame subgroup}
$\T^{\rm g}_n(R)$ of $\Aut _R\Rx $ by
$$
\T^{\rm g}_n(R):=\langle \Aff ^{\rm g}_n(R),\T_n(R)\rangle ,
$$
which is a natural generalization of $\T _n(R)$.\\

{\bf C. The differentially affine subgroup}\\

Let us look at another aspect of affine automorphisms.
For each $\phi \in \Aut _R\Rx $,
we denote by $J\phi $ the Jacobian matrix of $\phi $.
Then,
$J\phi $ always belongs to $\GL _n(\Rx )$.
Note that $J\phi $ belongs to
$M_n(R)\cap \GL _n(\Rx )=\GL _n(R)$
if $\phi $ is affine,
where the converse also holds when $p=0$.
We define
$$
\Aff ^{\rm d}_n(R)
:=\{ \phi \in \Aut _R\Rx \mid
J\phi \in \GL _n(R)\} .
$$
Then,
$\Aff ^{\rm d}_n(R)$ contains $\Aff _n(R)$,
and $\Aff ^{\rm d}_n(R)=\Aff _n(R)$ if $p=0$.
By chain rule,
$$
J(\phi \circ \psi )=\phi (J\psi )J\phi
$$
holds for each $\phi ,\psi \in \Aut _R\Rx $,
where
$\phi (J\psi )$ is the matrix
obtained from $J\psi $
by mapping each component by $\phi $.
Since $\phi (J\psi )=J\psi $
if $\psi $ belongs to $\Aff ^{\rm d}_n(R)$,
we see that
$\Aff ^{\rm d}_n(R)$ forms a subgroup of $\Aut _R\Rx $.
We call $\Aff ^{\rm d}_n(R)$
the {\it differentially affine subgroup}
of $\Aut _R\Rx $.
By definition,
$\Aff ^{\rm d}_n(R)$ is equal
to the inverse image of $\GL _n(R)$
by the map
$$
\Aut _R\Rx \ni \phi \mapsto J\phi \in \GL _n(\Rx ).
$$
This map itself is not a homomorphism of groups if $n\geq 2$,
but the induced map
$\Aff ^{\rm d}_n(R)\to \GL _n(R)$
is a homomorphism
of groups for any $n\geq 1$.

Assume that $p>0$. Then,
$\phi \in \Aut _R\Rx $ belongs to $\Aff ^{\rm d}_n(R)$
if and only if
there exists a linear automorphism $\psi $
such that $J\phi =J\psi $,
or equivalently
$$
\phi (x_i)-\psi (x_i)
\in\left\{
f\in \Rx \Bigm|
\frac{\partial f}{\partial x_j}=0
{\rm \ for\ }j=1,\ldots ,n
\right\}
=R[x_1^p,\ldots ,x_n^p]
$$
for $i=1,\ldots ,n$.
From this,
we see that $\Aff ^{\rm d}_n(R)$ contains $\Add ^{\rm g}_n(R)$,
and thus contains $\Aff ^{\rm g}_n(R)$.
We define the {\it differentially tame subgroup}
$\T^{\rm d}_n(R)$ of $\Aut _R\Rx $ by
$$
\T^{\rm d}_n(R):=\langle \Aff ^{\rm d}_n(R),\T_n(R)\rangle .
$$

{\bf D. Affine type subgroups}\\

It holds that
\begin{equation}\label{eq:ABC}
\T _n(R)
\stackrel{\rm a}{\subset }\T ^{\rm g}_n(R)
\stackrel{\rm b}{\subset }\T ^{\rm d}_n(R)
\stackrel{\rm c}{\subset }\Aut _R\Rx .
\end{equation}
More precisely,
we have the following statements 
(cf.\ Table 1):

\noindent (1)
If $n=1$,
or if $n=2$ and $R$ is a field,
then the four subgroups above are equal,
since $\T _n(R)=\Aut _R\Rx $.

\noindent (2)
If $p=0$,
then we have
$\T _n(R)=\T ^{\rm g}_n(R)=\T ^{\rm d}_n(R)$
for any $n\geq 1$,
since $\Aff _n(R)=\Aff ^{\rm g}_n(R)=\Aff ^{\rm d}_n(R)$.

\noindent (3)
If $R$ is a field,
then we have $\T_n(R)=\T ^{\rm g}_n(R)$
for any $n\geq 1$,
since $\Add _n(R)$ is contained in $\T _n(R)$.

\noindent (4)
If $p>0$ and $R$ is not a field,
then we have $\T_2(R)\neq \T ^{\rm g}_2(R)$,
since $\Add _2(R)$ is not contained in $\T _2(R)$.

\medskip 

\begin{center}

Known facts about (a,b,c) for a, b and c in (\ref{eq:ABC})

\smallskip 

\begin{tabular}{|c||c|c|c|c|} \hline 
 &
\multicolumn{2}{|c|}{$p=0$} &
\multicolumn{2}{|c|}{$p>0$} \\ \hline 
$R$ & field & non-field & field & non-field \\ 
\hline 
$n=1$ & \multicolumn{4}{|c|}{ $(=,=,=)$ } \\
\cline{1-1} \cline{3-3} \cline{5-5} 
$n=2$ & & $(=,=,\neq )$ & & $(\neq ,\ ?,\ ?)$ \\ \hline 
$n=3$ & \multicolumn{2}{|c|}{$(=,=,\neq )$ } & 
\multirow{2}{*}{$(=,\ ?,\ ?)$} &  
\multirow{2}{*}{(\ ?,\ ?,\ ?)} \\ \cline{1-3} 
$n\geqslant 4$ & \multicolumn{2}{|c|}{$(=,=,\ ?)$} & & \\ \hline 
\end{tabular}

\medskip 

Table 1
\end{center}

\medskip

Similarly to Problem~\ref{problem:tgp},
we can consider the following problem.

\begin{problem}\label{problem:gtgp}\rm
Let $R$ be a domain of positive characteristic.

\noindent{\rm (1)} Does it hold that
$\T ^{\rm g}_n(R)=\T ^{\rm d}_n(R)$
when $n\geq 2$ and $R$ is not a field?

\noindent{\rm (2)} Does it hold that
$\T ^{\rm d}_2(R)=\Aut _RR[x_1,x_2]$
when $R$ is not a field?

\noindent{\rm (3)} Does it hold that
$\T ^{\rm d}_n(R)=\Aut _R\Rx $ when $n\geq 3$?
\end{problem}

The following theorem
is the first main result of this paper.
This gives negative solutions to
Problem~\ref{problem:gtgp} (1) for $n=2$
and Problem~\ref{problem:gtgp} (2).
Here, 
$H\ntriangleleft G$ denotes that $H$ 
is not a normal subgroup of a group $G$.

\begin{theorem}\label{thm:gdt}
Let $R$ be a domain of positive characteristic
which is not a field.
Then,
we have
$$
\T_2(R)
\ntriangleleft
\T ^{\rm g}_2(R)
\ntriangleleft
\T ^{\rm d}_2(R)
\ntriangleleft
\Aut _RR[x_1,x_2].
$$
In particular, 
$\T_2(R)$, 
$\T ^{\rm g}_2(R)$, 
$\T ^{\rm d}_2(R)$ 
and $\Aut _RR[x_1,x_2]$ 
are different. 
\end{theorem}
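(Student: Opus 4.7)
The strategy is to exhibit, for each of the three inclusions $\T_2(R)\subset\T^{\rm g}_2(R)\subset\T^{\rm d}_2(R)\subset\Aut_RR[x_1,x_2]$, explicit witnesses to non-normality: an element $\alpha$ in the larger group and an element $\tau$ in the smaller one with $\alpha\tau\alpha^{-1}$ outside the smaller group. Non-normality forces strict containment, so the ``in particular'' clause comes for free. Throughout I fix a nonzero non-unit $a\in R$, which exists because $R$ is a non-field domain of positive characteristic.

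For $\T_2(R)\ntriangleleft\T^{\rm g}_2(R)$, the natural candidate for $\alpha$ is a non-tame additive automorphism, which exists in $\Add_2(R)$ by the second author's result cited in the introduction. Pairing such an $\alpha$ with a simple elementary $\tau=(x_1,x_2+h(x_1))$, the additivity of $\alpha$ and $\alpha^{-1}$ yields after a short computation
$$\alpha\tau\alpha^{-1}=(x_1+F_2(h(f)),\ x_2+G_2(h(f))),$$
where $f=\alpha(x_1)$ and $F_2,G_2$ are the one-variable $x_2$-parts of the additive polynomials $\alpha^{-1}(x_1)$ and $\alpha^{-1}(x_2)$. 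This is a ``shear along $f$''; with $h$ chosen so that $h(f)$ has sufficiently large degree in the right variable, the result should be wild, which is proved by invoking a normal-form or amalgamated-product-type structure theorem for $\T_2(R)$.

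For $\T^{\rm g}_2(R)\ntriangleleft\T^{\rm d}_2(R)$, I would use the explicit description of $\Aff^{\rm d}_2(R)$ derived above: its elements have components of the form $c_i+M_{i1}x_1+M_{i2}x_2+f_i(x_1^p,x_2^p)$ with $M\in\GL_2(R)$, $c_i\in R$ and $f_i\in R[x_1^p,x_2^p]$. Pick $\beta\in\Aff^{\rm d}_2(R)$ whose Frobenius part $f_i$ contains a mixed monomial such as $ax_1^px_2^p$ (so $\beta\notin\Aff^{\rm g}_2(R)$), and conjugate a suitable element of $\T^{\rm g}_2(R)$, e.g.\ a translation or a non-tame additive automorphism, by $\beta$. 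The characterisation ``$\phi\in\Aff^{\rm g}_2(R)$ iff each $\phi(x_i)-b_i$ is a $p$-polynomial'', together with how the tame generators interact with $\Aff^{\rm g}_2$, should obstruct the conjugate from lying in $\T^{\rm g}_2(R)$.

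For $\T^{\rm d}_2(R)\ntriangleleft\Aut_RR[x_1,x_2]$, take a Nagata-type wild automorphism $\gamma$, which exists since $R$ is a non-field domain, and conjugate some $\tau\in\T^{\rm d}_2(R)$. The chain rule gives
$$J(\gamma\tau\gamma^{-1})=(\gamma\tau)(J\gamma^{-1})\cdot\gamma(J\tau)\cdot J\gamma,$$
and the plan is to choose $\tau$ so that this Jacobian cannot be matched by any decomposition into $\Aff^{\rm d}_2(R)$- and $\T_2(R)$-factors. The main obstacle throughout is this negative-membership question: showing a specific automorphism lies \emph{outside} a subgroup requires invariants going beyond the generating definitions, such as a normal form, a filtration, or a conjugation-invariant characterisation. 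I expect the authors to develop these structural tools earlier in the paper (the abstract alludes to ``detailed structures'' of the tame subgroups), so that the proof of Theorem~\ref{thm:gdt} reduces to a direct verification on the explicit examples above.
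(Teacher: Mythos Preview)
Your high-level strategy is right and matches the paper's: exhibit for each inclusion an element $g$ of the larger group and a $t$ in the smaller one with $gtg^{-1}$ escaping the smaller group, then invoke a structural result to certify non-membership. You also correctly anticipate that the hard part is the non-membership verification, and that this is where the paper's machinery comes in.

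Where the paper diverges from your proposal is in \emph{uniformity}. Rather than three separate ad-hoc constructions, the paper introduces a single framework of \emph{affine type subgroups} $\mathcal{A}^I$ indexed by $p$-stable subsets $I\subset\N_{\ge 2}$, with $\Aff_2(K)$, $\Aff^{\rm g}_2(K)$, $\Aff^{\rm d}_2(K)$, $\Aut_KK[x,y]$ corresponding to $I=\emptyset$, $\{p^j:j\ge 1\}$, $p\N_+$, $\N_{\ge 2}$. All three non-normalities then follow from one statement (Corollary~\ref{cor:notnormal}): given $n\in J\setminus I$ and a non-unit $a\in R$, the element
\[
\sigma=\beta_1\tau\beta_2,\qquad \beta_1=(a^2x+y+ay^n,\,y),\ \ \tau=(y,x),\ \ \beta_2=(a^{-2}(y-ay^n-x),\,y),
\]
lies in $A^J$, and $\sigma\tau\sigma^{-1}\notin\langle A^I,T\rangle$. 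The non-membership is read off from a general amalgamated-product criterion (Corollary~\ref{normal}): if $\theta$ is a reduced $(\mathcal A,\mathcal B)$-word for $t$ starting and ending in $\mathcal A$, and $\gamma$ is one for $g$ starting in $\mathcal B\setminus B(\mathcal B\cap\mathcal H)$ and ending in $\mathcal B$, then $gtg^{-1}\notin\langle H,T\rangle$. Here $t=\tau=(y,x)$ and $\gamma=(\beta_1,\tau,\beta_2)$; the only thing to check is $\beta_1\notin B(\mathcal B\cap\mathcal A^I)$, which is immediate from $n\notin I$ and $a\notin R^\times$.

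Your specific witnesses would not all slot cleanly into this machinery. In particular, for the middle inclusion you propose taking $\beta\in\Aff^{\rm d}_2(R)$ containing a mixed monomial like $ax_1^px_2^p$ ``so $\beta\notin\Aff^{\rm g}_2(R)$''; but the relevant obstruction is $\beta\notin\T^{\rm g}_2(R)$, which is much stronger, and an elementary automorphism such as $(x_1+ax_1^px_2^p,x_2)$ is already tame, hence in $\T^{\rm g}_2(R)$. Producing a genuinely non-$\T^{\rm g}_2(R)$ element of $\Aff^{\rm d}_2(R)$ and then verifying the conjugate escapes $\T^{\rm g}_2(R)$ is exactly the problem the paper's uniform construction and amalgamated-product criterion are designed to solve. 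Likewise, your Jacobian computation for the third case does not by itself yield a non-membership invariant for $\T^{\rm d}_2(R)$. The paper's contribution is precisely to replace these case-by-case attempts with a single reduced-word argument over $K$ that works at every level.
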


Let $\BA _n(R)$ be the {\it de Jonqui\`eres subgroup} 
of $\Aut _R\Rx $, i.e., 
the set of $\phi \in \Aut _R\Rx $ such that 
$\phi (x_i)$ belongs to $R[x_i,\ldots ,x_n]$ 
for $i=1,\ldots ,n$. 
Let $K$ be the field of fractions of $R$. 
We remark that 
$\Aff ^{\rm g}_2(K)$ and $\Aff ^{\rm d}_2(K)$ 
share the following two properties.
First, 
if we denote by ${\cal H}$ one of these two groups, 
we have (see Proposition~\ref{prop:gdaffine}):
$${\cal H}=\langle \Aff _2(K),\BA _2(K)\cap {\cal H}\rangle.$$
The second property is more technical
and say roughly speaking that the intersection $\BA _2(K)\cap {\cal H}$ 
can be described by monomials of a certain type
(see Section~4 for a precise definition). We say that a subgroup ${\cal H}$ is an \textit{affine type subgroup} of $\Aut _KK[x_1,x_2]$
if it satisfied this two properties. With this definition $\Aff_2(K)$, $\Aff ^{\rm g}_2(K)$, $\Aff ^{\rm d}_2(K)$ and $\Aut _KK[x_1,x_2]$
are affine type subgroups of $\Aut _KK[x_1,x_2]$ but there exist infinitely many other affine type subgroup of $\Aut _KK[x_1,x_2]$.
For a given affine type subgroup ${\cal H}$ of $\Aut _KK[x_1,x_2]$ we define
a  subgroup of $\Aut _RR[x_1,x_2]$ by
$$T({\cal H})=\langle{\cal H}\cap\Aut _RR[x_1,x_2],T_2(R)\rangle.$$
For example $T(\Aff_2(K))=T_2(R)$, $T(\Aff^{\rm g}_2(K))=T_2^{\rm g}(R)$, $T(\Aff^{\rm d}_2(K))=T_2^{\rm d}(R)$
and $T(\Aut _KK[x_1,x_2])=\Aut _RR[x_1,x_2]$. Finally we prove the following general result (see Theorem~\ref{thm:notnormal}).

\begin{theorem}\label{thm:main1}
We assume that $R$ is not a field.
Let ${\cal H}_1,{\cal H}_2$ be two affine type subgroups of $\Aut _KK[x_1,x_2]$ such that ${\cal H}_1\subsetneqq{\cal H}_2$.
Then $T({\cal H}_1)$ is a subgroup of $T({\cal H}_2)$ which is not normal.
\end{theorem}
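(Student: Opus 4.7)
The containment $T({\cal H}_1)\subseteq T({\cal H}_2)$ is immediate: from ${\cal H}_1\subseteq{\cal H}_2$ we obtain ${\cal H}_1\cap\Aut_RR[x_1,x_2]\subseteq{\cal H}_2\cap\Aut_RR[x_1,x_2]$, and both $T({\cal H}_1)$ and $T({\cal H}_2)$ contain $\T_2(R)$, so every generator of $T({\cal H}_1)$ already lies in $T({\cal H}_2)$. The substance of the theorem is therefore the non-normality assertion, which I would establish by producing $\phi\in T({\cal H}_2)$ and $\sigma\in T({\cal H}_1)$ such that $\phi\sigma\phi^{-1}\notin T({\cal H}_1)$.

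The backbone of the argument is the Jung--van der Kulk theorem, realising $\Aut_KK[x_1,x_2]$ as the amalgamated free product $\Aff_2(K)*_{\Aff_2(K)\cap\BA_2(K)}\BA_2(K)$. The first defining property of an affine type subgroup, ${\cal H}=\langle\Aff_2(K),\BA_2(K)\cap{\cal H}\rangle$, forces ${\cal H}$ itself to inherit an amalgamated presentation with de~Jonqui\`eres factor $\BA_2(K)\cap{\cal H}$, so membership in ${\cal H}$ is controlled by the de~Jonqui\`eres syllables of the reduced form. The second property translates this control into a purely combinatorial condition: the exponents of the monomials occurring in those syllables must lie in a prescribed set $M_{\cal H}$. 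Descending from $K$ to $R$, I expect an analogous reduced-form characterisation of $T({\cal H})$ inside $\Aut_KK[x_1,x_2]$, up to the affine rescalings contributed by $\T_2(R)$.

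Since ${\cal H}_1\subsetneqq{\cal H}_2$, property~1 yields an element of $\BA_2(K)\cap{\cal H}_2$ outside ${\cal H}_1$, and property~2 ensures that it may be taken to involve a monomial $x_1^{\alpha}$ with $\alpha\in M_{{\cal H}_2}\setminus M_{{\cal H}_1}$. Using that $R$ is not a field, I fix a non-unit $r\in R\setminus(R^{\times}\cup\{0\})$ and set
$$\sigma=(x_1,\,x_2+rx_1^{\alpha})\in \BA_2(K)\cap{\cal H}_2\cap\Aut_RR[x_1,x_2]\subseteq T({\cal H}_2).$$
I would then choose $\phi\in \T_2(R)\subseteq T({\cal H}_1)$ to be an $R$-affine automorphism whose interaction with $\sigma$ is non-trivial in the amalgamated product—for instance a generic swap combined with an elementary translation, designed so that reducing $\phi\sigma\phi^{-1}$ to amalgamated normal form over $K$ leaves a de~Jonqui\`eres syllable still involving $x_1^{\alpha}$. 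The characterisation above then prevents $\phi\sigma\phi^{-1}$ from lying in $T({\cal H}_1)$.

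The principal obstacle is precisely the normal-form characterisation: one must prove that the elementary and affine generators of $\T_2(R)$ cannot synthesise monomial exponents outside $M_{{\cal H}_1}$ in the reduced form of any element of $T({\cal H}_1)$. This structural lemma—essentially a Jung--van der Kulk style amalgamated description of $T({\cal H})$ with de~Jonqui\`eres factor controlled jointly by $\BA_2(K)\cap{\cal H}$ and $\BA_2(R)\cap\T_2(R)$—is where the technical work concentrates and should be the content of Section~4. Once established, the exponent $\alpha\in M_{{\cal H}_2}\setminus M_{{\cal H}_1}$ surviving in $\phi\sigma\phi^{-1}$ gives the required obstruction uniformly for every strict inclusion ${\cal H}_1\subsetneqq{\cal H}_2$.
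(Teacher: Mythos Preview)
Your high-level architecture is right, and your final paragraph correctly locates the technical weight: a syllable-by-syllable membership criterion for $T({\cal H})$ coming from the amalgamated structure. The paper develops exactly that in Section~3 and applies it in Section~4. But the concrete witness you propose cannot work, and the failure points to the one idea you are missing.

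Your element $\sigma=(x_1,\,x_2+rx_1^{\alpha})$ with $r\in R$ is an \emph{elementary} automorphism of $R[x_1,x_2]$, hence lies in $\T_2(R)\subseteq T({\cal H}_1)$. Your $\phi$ is also taken in $\T_2(R)$. Since $\T_2(R)$ is a group, the conjugate $\phi\sigma\phi^{-1}$ (or $\sigma\phi\sigma^{-1}$; your text swaps the roles of $\phi$ and $\sigma$ midway) stays in $\T_2(R)\subseteq T({\cal H}_1)$, and no obstruction can arise. The subtlety is that $T({\cal H}_1)$ already contains all of $\BA_2(R)$, so the ``forbidden'' exponent $\alpha$ is perfectly legal in a de~Jonqui\`eres syllable with $R$-coefficients. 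What the syllable criterion actually forbids is a de~Jonqui\`eres syllable over $K$ whose exponent-$\alpha$ term cannot be absorbed into $\BA_2(R)$ on one side and $\BA_2(K)\cap{\cal H}_1$ on the other.

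Consequently the conjugating element must be a \emph{non-tame} member of ${\cal H}_2\cap\Aut_RR[x_1,x_2]$, and the paper builds one \`a~la Nagata: with $a\in R\setminus(R^{\times}\cup\{0\})$ and $n$ in the index set for ${\cal H}_2$ but not ${\cal H}_1$, set $\beta_1=(a^2x+y+ay^{n},\,y)$, $\tau=(y,x)$, $\beta_2=(a^{-2}(y-ay^{n}-x),\,y)$, and $g=\beta_1\tau\beta_2\in{\cal H}_2\cap\Aut_RR[x_1,x_2]\subseteq T({\cal H}_2)$. Conjugating the swap $\tau\in\T_2(R)\subseteq T({\cal H}_1)$ by $g$ yields a reduced $(\Aff_2(K),\BA_2(K))$-word whose first syllable is $\beta_1$. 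Because $a$ is a non-unit, the term $ay^{n}$ in $\beta_1$ cannot be cancelled by anything in $\BA_2(R)$, and because $n$ is outside the index set of ${\cal H}_1$ it is not allowed in $\BA_2(K)\cap{\cal H}_1$ either; hence $\beta_1\notin\BA_2(R)\,(\BA_2(K)\cap{\cal H}_1)$, and the syllable criterion forces $g\tau g^{-1}\notin T({\cal H}_1)$.
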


The second main result of this paper
is about the group theoretic-structures of
$\T ^{\rm g}_2(R)$ and $\T ^{\rm d}_2(R)$ and more generally $T({\cal H})$ where ${\cal H}$ is an affine type subgroup of $\Aut _KK[x_1,x_2]$.
For a group $G$ and subgroups $H$ and $H'$ of $G$
with $G=\langle H,H'\rangle $,
recall that $G$ is said to be the
{\it amalgamated product} 
of $H$ and $H'$ over $H\cap H'$ (we write $G=H*_{\cap}H'$)
if $\al_1\cdots \al_l\neq 1$ for any $l\geq 1$, and $\al_i\in (H\pri H')\cup (H'\pri H)$ for $i=1,\ldots ,l$ such that
$\al_i\al_{i+1}\not\in H\cup H'$ for $i=1,\ldots ,l-1$.
Then, the following theorem is well known 
(cf.\cite{van den Essen} Corollary~5.1.3).

\begin{theorem}\label{thm:afp}
If $R$ is a domain, then $\T _2(R)=\Aff _2(R)*_{\cap}\BA _2(R)$. 
\end{theorem}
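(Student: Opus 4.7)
The plan is to verify the two defining properties of the amalgamated product: generation by $\Aff _2(R)\cup \BA _2(R)$, and nontriviality of alternating reduced words.

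For generation, I would note that each elementary automorphism is either in $\BA _2(R)$ (those of the form $(ax_1+f(x_2),x_2)$ with $a\in R^{\times }$, $f\in R[x_2]$) or conjugate to such by the affine swap $\sigma =(x_2,x_1)\in \Aff _2(R)$ (those of the form $(x_1,ax_2+g(x_1))=\sigma \circ (ax_1+g(x_2),x_2)\circ \sigma ^{-1}$). Hence $\E _2(R)\subseteq \langle \Aff _2(R),\BA _2(R)\rangle $, and together with $\Aff _2(R)$ this generates $\T _2(R)$.

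For the reduced-word condition, the key idea is to reduce to the classical Jung--van der Kulk theorem over the field of fractions $K$ of $R$. Via the natural injection $\iota :\Aut _RR[x_1,x_2]\hookrightarrow \Aut _KK[x_1,x_2]$, the subgroups satisfy the intersection identities
\[
\Aff _2(R)\cap \BA _2(K)=\Aff _2(R)\cap \BA _2(R)=\BA _2(R)\cap \Aff _2(K),
\]
since the defining conditions of $\BA _2$ (vanishing of a coefficient, respectively an upper bound on a polynomial degree) are insensitive to the passage from $R$ to $K$. A direct consequence is that if $\alpha _1\cdots \alpha _l$ is an alternating reduced word in $\Aff _2(R),\BA _2(R)$, then $\iota (\alpha _1)\cdots \iota (\alpha _l)$ is an alternating reduced word in $\Aff _2(K),\BA _2(K)$: indeed, $\iota (\alpha _i)\in \Aff _2(K)\cap \BA _2(K)$ would force $\alpha _i\in \Aff _2(R)\cap \BA _2(R)$ by the above identities, contradicting that $\alpha _i$ lies in one of the two subgroups but not the other. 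By the Jung--van der Kulk theorem (see \cite{van den Essen}, Corollary~5.1.3), $\Aut _KK[x_1,x_2]=\Aff _2(K)*_{\cap }\BA _2(K)$, so this latter word is nontrivial, and injectivity of $\iota $ yields nontriviality of the original word in $\T _2(R)$.

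The main input, and the only nontrivial step in the plan, is the field case (Jung--van der Kulk), which I would cite as a black box; everything else is a direct bookkeeping of the definitions.
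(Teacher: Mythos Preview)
The paper does not give its own proof of this theorem; it simply records it as well known and cites van den Essen, Corollary~5.1.3. Your argument is correct and is essentially the standard one behind that reference: pass to the fraction field $K$, use the Jung--van der Kulk amalgamated product structure of $\Aut_KK[x_1,x_2]$, and pull back via the two intersection identities $\Aff_2(R)\cap\BA_2(K)=\Aff_2(R)\cap\BA_2(R)=\BA_2(R)\cap\Aff_2(K)$, which you justify correctly.

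One small point in the generation step: you establish $\E_2(R)\subseteq\langle\Aff_2(R),\BA_2(R)\rangle$, hence $\T_2(R)\subseteq\langle\Aff_2(R),\BA_2(R)\rangle$, but you should also state the reverse inclusion. It is immediate once you note $\BA_2(R)\subseteq\E_2(R)$: any $(cx_1+g(x_2),\,ax_2+b)\in\BA_2(R)$ factors as $(cx_1+g(x_2),x_2)\circ(x_1,ax_2+b)$, a product of two elementary automorphisms. With that, $\langle\Aff_2(R),\BA_2(R)\rangle=\T_2(R)$ and both halves of the amalgamated product statement are in place.
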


Using Theorem~\ref{thm:afp}, we prove the following result (see Theorem~\ref{thm:productaffine}).

\begin{theorem}\label{thm:main2}
Let ${\cal H}$ be an affine type subgroup of $\Aut _KK[x_1,x_2]$. Then
$$T({\cal H})=({\cal H}\cap \Aut _RR[x_1,x_2])*_{\cap}{\rm T}_2(R).$$
In particular, $\T ^{\rm g}_2(R)=\Aff ^{\rm g}_2(R)*_{\cap}\T _2(R)$ 
and $\T ^{\rm d}_2(R)=\Aff ^{\rm d}_2(R)*_{\cap}\T _2(R)$.
\end{theorem}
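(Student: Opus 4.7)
My plan is to establish the amalgamated product structure by working inside the ambient group $\Aut _KK[x_1,x_2]$, which by the Jung--van der Kulk theorem equals $\Aff _2(K)*_{\cap }\BA _2(K)$, and to exploit the uniqueness of its reduced normal forms. Set $H:=\mathcal{H}\cap \Aut _RR[x_1,x_2]$ and, for brevity, $T:=\T _2(R)$. Theorem~\ref{thm:afp} yields $T=\Aff _2(R)*_{\cap }\BA _2(R)$. The first defining property of an affine type subgroup states $\mathcal{H}=\langle \Aff _2(K),\BA _2(K)\cap \mathcal{H}\rangle $, and since $\Aff _2(K)\cap \BA _2(K)\subseteq \Aff _2(K)\subseteq \mathcal{H}$, the standard criterion for subgroups of amalgamated products that contain one vertex group upgrades this to $\mathcal{H}=\Aff _2(K)*_{\cap }(\BA _2(K)\cap \mathcal{H})$. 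Consequently every element of $\mathcal{H}$, and hence of $H$, has a unique Jung--van der Kulk reduced form $a_0b_1a_1\cdots b_ka_k$ with $a_j\in \Aff _2(K)$ and $b_j\in \BA _2(K)\cap \mathcal{H}$, while every element of $T$ admits an analogous reduced form with $a_j\in \Aff _2(R)$ and $b_j\in \BA _2(R)$.

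To verify the amalgamated structure of $T(\mathcal{H})$, I would assume for contradiction that some alternating word $w=\phi_1\phi_2\cdots \phi_l$ with $\phi_i\in (H\setminus T)\cup (T\setminus H)$ (the two types genuinely alternating) equals $1$ in $\Aut _RR[x_1,x_2]$. Concatenating the Jung--van der Kulk normal forms of the $\phi_i$ produces a word $W$ in $\Aff _2(K)\cup \BA _2(K)$ representing $1$ in $\Aut _KK[x_1,x_2]$; by uniqueness of the normal form, $W$ must reduce to the empty word. Within each block $\phi_i$ the expression is already reduced, so the only possible collapses occur at the junctions $\phi_i\phi_{i+1}$: the trailing affine letter $a_{k_i}^{(i)}$ of $\phi_i$ and the leading affine letter $a_0^{(i+1)}$ of $\phi_{i+1}$ multiply to a single element of $\Aff _2(K)$, and if that product lies in $C:=\Aff _2(K)\cap \BA _2(K)$ it migrates into the flanking triangular letters $b_{k_i}^{(i)}$ and $b_1^{(i+1)}$, possibly triggering a further cascade.

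The heart of the argument, and the step I expect to be the main obstacle, is showing that such junction cascades cannot propagate through the entire word $W$. This is exactly what the second defining property of an affine type subgroup---which describes $\BA _2(K)\cap \mathcal{H}$ in terms of specific monomials, as will be made precise in Section~4---is designed to control. In the critical case $\phi_i\in H\setminus T$ and $\phi_{i+1}\in T\setminus H$, the element $b_{k_i}^{(i)}\in \BA _2(K)\cap \mathcal{H}$ has its non-affine part of the prescribed monomial shape, while $b_1^{(i+1)}\in \BA _2(R)$ has $R$-integral coefficients; a direct calculation of the merged triangular letter $b_{k_i}^{(i)}\,c\,b_1^{(i+1)}$ with $c\in C$ should show that it can lie in $C$ only when the top monomial contributions from the two sides cancel, and the monomial description rules out any such cancellation unless $\phi_i\in T$ or $\phi_{i+1}\in H$, contradicting the hypothesis on the word. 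Iterating over successive junctions shows that $W$ cannot fully collapse, so $w\neq 1$ and the amalgamated product structure $T(\mathcal{H})=H*_{\cap }T$ follows.

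Finally, specializing to $\mathcal{H}=\Aff ^{\rm g}_2(K)$ and $\mathcal{H}=\Aff ^{\rm d}_2(K)$ and invoking Proposition~\ref{prop:gdaffine} (which asserts that both are affine type subgroups of $\Aut _KK[x_1,x_2]$), the general formula yields $\T ^{\rm g}_2(R)=\Aff ^{\rm g}_2(R)*_{\cap }\T _2(R)$ and $\T ^{\rm d}_2(R)=\Aff ^{\rm d}_2(R)*_{\cap }\T _2(R)$ as stated.
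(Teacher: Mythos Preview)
Your overall strategy---decompose each $\phi_i$ into a reduced $(\Aff_2(K),\BA_2(K))$-word and argue that the concatenation cannot collapse to the identity---is exactly the paper's approach (Theorem~\ref{thm} and Corollary~\ref{product}). But you misidentify the tool that controls the junction cascades, and the step you label ``the heart of the argument'' has a genuine gap.

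You invoke the \emph{second} defining property of an affine type subgroup (the monomial description of $\BA_2(K)\cap\mathcal{H}$) to block cancellations. The paper's proof uses \emph{only} the compatibility conditions $(i)$ $\mathcal{A}\cap\mathcal{B}\subset\mathcal{H}$ and $(ii)$ $\mathcal{H}=\langle\mathcal{A}\cap\mathcal{H},\mathcal{B}\cap\mathcal{H}\rangle$; the $p$-stability/monomial data plays no role in Corollary~\ref{product} (it is used later, for Proposition~\ref{affinetype} and the non-normality results). More concretely, your claim that ``the monomial description rules out any such cancellation unless $\phi_i\in T$ or $\phi_{i+1}\in H$'' is false as stated: take $I=p\N_+$, $b_{k_i}^{(i)}=(x+y^p,y)\in\BA_2(K)\cap\mathcal{H}$, $c=\mathrm{id}$, $b_1^{(i+1)}=(x-y^p,y)\in\BA_2(R)$; then $b_{k_i}^{(i)}cb_1^{(i+1)}=\mathrm{id}\in C$, yet this says nothing about whether the full elements $\phi_i,\phi_{i+1}$ lie in $T$ or $H$. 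The cascade simply continues deeper into the blocks, and your proposal gives no mechanism to stop it.

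The device the paper actually uses is a minimality trick. Among all $(H,T)$-reduced words equivalent to $\alpha$ modulo $H\cap T$, choose one minimizing $L(\alpha)=\sum_{i\in I_H(\alpha)}\ell(\alpha_i)$. If a junction collapse occurs, i.e.\ ${\bf t}(D_i){\bf h}(D_{i+1})\in\mathcal{A}\cap\mathcal{B}$, then by compatibility $(i)$ this product lies in $\mathcal{H}$, so one of ${\bf t}(D_i)$, ${\bf h}(D_{i+1})$ lies in $H\cap T$; sliding it across the junction strictly decreases $L$, contradicting minimality. (A residual case with $\ell(\alpha_i)=1$ is dispatched separately using $\ell(g)\ge 2$ for $g\in G\setminus T$.) This is what makes the cascade analysis tractable, and it is the missing idea in your proposal.
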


The structure of this paper is as follows.
In Section~2, we prove that $${\cal H}=\langle \Aff _2(K),\BA _2(K)\cap {\cal H}\rangle.$$
for the subgroups ${\cal H}=\Aff ^{\rm g}_2(K)$ and ${\cal H}=\Aff ^{\rm d}_2(K)$. In the geometrical case,
we prove this property in any dimension (see Proposition~\ref{prop:gdaffine}).
In Section~3, we prove a technical result on abstract groups with amalgamated structure 
(see Theorem~\ref{thm}). We deduce three corollaries. 
In section~4, we introduce the notion of affine type subgroup of $\Aut _KK[x_1,x_2]$
and we apply the first two corollaries of Section~3 to obtain Theorems~\ref{thm:main1} and \ref{thm:main2}.
The third one is used to study in detail the case of length~$3$ differentially tame automorphisms
(see Corollary~\ref{ex}).

\section{Geometrically affine and differentially affine automorphisms}
\setcounter{equation}{0}

Throughout this section,
we assume that $K$ is a field of characteristic $p>0$.
The goal of this section
is to prove the following proposition.

\begin{proposition}\label{prop:gdaffine}
\noindent{\rm (i)} For each $n\geq 1$,
we have
$$
\Aff ^{\rm g}_n(K)
=\langle \Aff _n(K), \BA _n(K)\cap \Aff ^{\rm g}_n(K)\rangle .
$$

\noindent{\rm (ii)} For $n=1,2$,
we have
$$
\Aff ^{\rm d}_n(K)
=\langle \Aff _n(K),\BA _n(K)\cap \Aff ^{\rm d}_n(K)\rangle .
$$
\end{proposition}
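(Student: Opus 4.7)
For Part (i), since $\Aff ^{\rm g}_n(K) = K^n \rtimes \Add _n(K)$ with $K^n \subset \Aff _n(K)$, the plan reduces to showing every additive automorphism lies in $\langle \Aff _n(K), \BA _n(K)\cap \Aff ^{\rm g}_n(K)\rangle$. Each $\phi \in \Add _n(K)$ corresponds to an invertible $n\times n$ matrix $M_\phi$ over the twisted polynomial ring $K\{F\}$, where $F$ denotes the Frobenius with commutation $Fa = a^p F$ and a $p$-polynomial $\sum a_j x^{p^j}$ is identified with the Ore polynomial $\sum a_j F^j$; composition of additive automorphisms is encoded by matrix multiplication. Because $K\{F\}$ is a left-Euclidean domain with unit group $K^\times$, I can run a Gauss--Jordan reduction on $M_\phi$ using three elementary row operations: row swaps, corresponding to permutation matrices in $\Aff _n(K)$; left-scaling by units, corresponding to diagonal matrices in $\Aff _n(K)$; and left-adding a multiple of a later row to an earlier row, corresponding to additive de Jonqui\`eres automorphisms in $\BA _n(K)\cap \Aff ^{\rm g}_n(K)$. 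The reduction terminates at the identity matrix and yields the desired factorisation.

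For Part (ii), the case $n = 1$ is immediate because $\Aut _K K[x_1] = \Aff _1(K) = \Aff ^{\rm d}_1(K)$. For $n = 2$, I invoke Jung--van der Kulk's amalgamated product $\Aut _K K[x,y] = \Aff _2(K) *_C \BA _2(K)$ with $C = \Aff _2(K)\cap \BA _2(K)$ and write $\phi \in \Aff ^{\rm d}_2(K)$ as a reduced alternating word $\gamma_1 \gamma_2 \cdots \gamma_l$; I then induct on $l$. When an end factor (say $\gamma_1$) lies in $\Aff _2(K)$ it automatically lies in $\Aff ^{\rm d}_2(K)$, so $\gamma_1^{-1}\phi$ is a strictly shorter reduced word still in $\Aff ^{\rm d}_2(K)$ and the inductive hypothesis applies. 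The essential case is therefore $\gamma_1 \in \BA _2(K) \setminus C$, where I must show $\gamma_1 \in \Aff ^{\rm d}_2(K)$: writing $\gamma_1(x) = \mu x + g(y) + c_0$, this amounts to $g'(y) \in K$.

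The main obstacle is this Jacobian rigidity. Applying the chain rule $J\phi = \gamma_1(J\phi')\cdot J\gamma_1$ with $\phi' = \gamma_2 \cdots \gamma_l$, the assumption $J\phi \in \GL _2(K)$ forces both entries in the first column of $\gamma_1(J\phi')$ to be constants, hence (as $\gamma_1$ is a $K$-algebra automorphism of $K[x,y]$) both entries of the first column of $J\phi'$ lie in $K$ as well. The off-diagonal entry $g'(y)$ then interacts with $J\phi'$ only through the products $\gamma_1(B)\cdot g'(y)$ and $\gamma_1(D)\cdot g'(y)$, where $B$ and $D$ are the $(1,2)$ and $(2,2)$ entries of $J\phi'$, and I expect to show that the constraint $\gamma_2 \in \Aff _2(K)\setminus C$ (so that $J\gamma_2$ has a nonzero lower-left entry) forces $B$ and $D$ into a shape incompatible with $g'(y)$ being nonconstant. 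Carrying this through cleanly will likely require a parallel induction on the shape of $J\phi'$ for reduced words starting with a non-triangular affine factor; once obtained, the main induction closes and $\phi \in \langle \Aff _2(K), \BA _2(K)\cap \Aff ^{\rm d}_2(K)\rangle$.
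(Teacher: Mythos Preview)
For Part (i), your argument via the left-Euclidean structure of the Ore ring $K\{F\}$ is correct and amounts to a direct proof of the key input the paper cites as Theorem~\ref{thm:vectgp} (Kuroda): $\Add_n(K)$ is generated by its elementary automorphisms. The paper then concludes via Lemma~\ref{lem:compatibility}, observing that conjugation by transpositions converts each $\E_n^i(K)\cap\Add_n(K)$ into $\E_n^1(K)\cap\Add_n(K)\subset\BA_n(K)$; this is exactly the mechanism you invoke when you note that row swaps allow arbitrary elementary additions to be built from the upper-triangular ones. So for (i) the two arguments essentially coincide.

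For Part (ii) with $n=2$, however, your proposal has a genuine gap. You correctly isolate the crux: given a reduced word $\phi=\gamma_1\gamma_2\cdots\gamma_l$ with $\gamma_1\in\BA_2(K)\setminus C$ and $J\phi\in\GL_2(K)$, one must show $g'(y)\in K$. But you do not prove this; you only say it ``will likely require a parallel induction'' on the shape of $J\phi'$. That induction is never set up, its hypothesis is not formulated, and no decreasing quantity is identified. Concretely, after your chain-rule step you know the first column $(A,C)$ of $J\phi'$ lies in $K$, and (say) $A\neq 0$ forces $B\in K[y]$ with $A\,g'(y)+\nu\,B(\nu y+d_0)\in K$; so what you actually need is $B\in K$. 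Passing to $\phi'=\gamma_2\phi''$ with $\gamma_2\in\Aff_2(K)\setminus C$ and writing $J\phi'=\gamma_2(J\phi'')\,J\gamma_2$ gives linear constraints on $\gamma_2(A''),\gamma_2(B'')$, but because $\gamma_2$ is a \emph{non-triangular} affine change of variables these do not translate into ``a column of $J\phi''$ is constant'', so you have not returned to your inductive hypothesis. The argument does not close as written.

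The paper sidesteps this by working with degrees instead of reduced words. Its Lemma~\ref{lem:pKulk} sharpens van der Kulk's reduction: if $(f_1,f_2)\in\Aff_2^{\rm d}(K)$ with $\deg f_i<\deg f_j$, then $\deg(f_j-\alpha f_i^{d})<\deg f_j$ for some $\alpha\in K^\times$ and some $d$ with $p\mid d$. The proof is a short derivative computation: if $p\nmid d$, differentiating $g:=f_j-\alpha_d f_i^{d}-\cdots-\alpha_2 f_i^{2}$ with respect to some $x_m$ for which $\partial f_i/\partial x_m\in K^\times$, and using that all $\partial f_k/\partial x_m$ lie in $K$, produces $\deg(\partial g/\partial x_m)\geq\deg f_i$ while also $\deg(\partial g/\partial x_m)\leq\deg f_i-1$, a contradiction. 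With this in hand, a minimal-degree argument reduces any $\phi\in\Aff_2^{\rm d}(K)$ to $\Aff_2(K)$ using elementary automorphisms $(x_1,x_2-\alpha x_1^{d})$ with $d=1$ or $p\mid d$, all of which lie in $\Aff_2^{\rm d}(K)\cap\E_2^1(K)$; Lemma~\ref{lem:compatibility} then finishes. This degree-and-derivative step is precisely what your outline is missing.
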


For $i=1,\ldots ,n$,
we denote by $\E _n^i(K)$
the set of automorphism of $\Kx $
over $K[x_1,\ldots ,x_{i-1},x_{i+1},\ldots ,x_n]$.
Then,
$E_n^1(K)$ is contained in $\BA _n(K)$.

We begin with the following useful lemma.

\begin{lemma}\label{lem:compatibility}
Let $\ca $ be a subgroup of $\Aut _K\Kx $
containing $\Aff _n(K)$.
If
\begin{equation}\label{eq:lem:compatibility}
\ca \subset \left\langle
\Aff _n(K),
\E ^1_n(K)\cap \ca ,\ldots ,\E ^n_n(K)\cap \ca
\right\rangle ,
\end{equation}
then we have
$\ca =\langle \Aff _n(K),\BA _n(K)\cap \ca \rangle $.
\end{lemma}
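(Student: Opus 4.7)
The plan is to establish both inclusions in the claimed equality. The inclusion $\langle \Aff _n(K),\BA _n(K)\cap \ca \rangle \subset \ca $ is immediate from $\Aff _n(K)\subset \ca $ and $\BA _n(K)\cap \ca \subset \ca $. For the reverse inclusion, I would invoke the hypothesis (\ref{eq:lem:compatibility}) and verify that each of the generators $\Aff _n(K)$ and $\E _n^i(K)\cap \ca $ (for $i=1,\ldots ,n$) appearing on its right-hand side already lies in $\langle \Aff _n(K),\BA _n(K)\cap \ca \rangle $.

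The affine generators are trivially in place. For an elementary generator $\phi \in \E _n^i(K)\cap \ca $, I would distinguish two cases. When $i=1$, the automorphism $\phi $ fixes $x_2,\ldots ,x_n$ and sends $x_1$ to an element of $K[x_1,\ldots ,x_n]$, so $\phi $ already lies in $\BA _n(K)$, and hence in $\BA _n(K)\cap \ca $.

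For $i\geq 2$, I would conjugate by the affine transposition $\sigma _i\in \Aff _n(K)$ exchanging $x_1$ and $x_i$. Since $\sigma _i\in \Aff _n(K)\subset \ca $, the conjugate $\sigma _i\phi \sigma _i^{-1}$ remains in $\ca $; and because conjugation by $\sigma _i$ carries $\E _n^i(K)$ onto $\E _n^1(K)\subset \BA _n(K)$, this conjugate in fact belongs to $\BA _n(K)\cap \ca $. Writing $\phi =\sigma _i^{-1}(\sigma _i\phi \sigma _i^{-1})\sigma _i$ then exhibits $\phi $ as a product of elements of $\Aff _n(K)$ and $\BA _n(K)\cap \ca $, which finishes the verification.

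The argument is essentially formal and I do not anticipate any real obstacle. The pivotal point is the hypothesis $\Aff _n(K)\subset \ca $: it both supplies the transpositions $\sigma _i$ as elements of $\ca $ and ensures that $\ca $ is stable under conjugation by them, which is precisely what lets us transport an elementary automorphism in the $i$-th direction into one in the first direction without leaving $\ca $.
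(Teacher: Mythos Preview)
Your proof is correct and follows essentially the same approach as the paper's: both arguments use the inclusion $\E^1_n(K)\subset\BA_n(K)$ together with conjugation by the affine transposition exchanging $x_1$ and $x_i$ (which lies in $\Aff_n(K)\subset\ca$) to carry $\E^i_n(K)\cap\ca$ into $\E^1_n(K)\cap\ca\subset\BA_n(K)\cap\ca$. The only cosmetic difference is that the paper names the intermediate subgroup $\cb:=\langle\Aff_n(K),\E^1_n(K)\cap\ca\rangle$ and shows $\ca\subset\cb$, whereas you work directly with $\langle\Aff_n(K),\BA_n(K)\cap\ca\rangle$.
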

\begin{proof}\rm
Since $\E ^1_n(K)$ is contained in $\BA _n(K)$,
and
$\Aff _n(K)$ is contained in $\ca $
by assumption,
we have
$$
\cb :=\langle \Aff _n(K),\E ^1_n(K)\cap \ca \rangle
\subset \langle \Aff _n(K),\BA _n(K)\cap \ca \rangle
\subset \ca .
$$
Hence,
it suffices to show that $\ca $ is contained in $\cb $.
For $i=1,\ldots ,n$,
we define $\tau _i\in \Aff _n(K)$ by
$$
\tau _i(x_1)=x_i,\quad
\tau _i(x_i)=x_1\quad
{\rm\ and\ }\quad
\tau _i(x_j)=x_j{\rm\ for\ }j=1,\ldots ,n{\rm\ with\ }j\neq 1,i.
$$
Then,
we have $\tau _i\circ \E ^1_n(K)\circ \tau _i=\E ^i_n(K)$.
Since $\tau _i$ belongs to $\Aff _n(K)\subset \ca $,
it follows that
$$
\E ^i_n(K)\cap \ca
=(\tau _i\circ \E ^1_n(K)\circ \tau _i)
\cap (\tau _i\circ \ca \circ \tau _i)
=\tau _i\circ (\E ^1_n(K)\cap \ca )\circ \tau _i.
$$
Hence,
$\E ^i_n(K)\cap \ca $
is contained in $\cb $ for $i=1,\ldots ,n$.
By definition,
$\Aff _n(K)$ is contained in $\cb $.
Therefore,
$\ca $ is contained in $\cb $ by
(\ref{eq:lem:compatibility}).
\end{proof}

Let us prove Proposition~\ref{prop:gdaffine} (i).
Since $\Aff ^{\rm g}_n(K)$ contains $\Aff _n(K)$,
we have only to check
that (\ref{eq:lem:compatibility})
holds for $\ca :=\Aff ^{\rm g}_n(K)$
thanks to Theorem~\ref{lem:compatibility}.
By the definition of $\Aff ^{\rm g}_n(K)$,
it suffices to show that
$\Add _n(K)$ is contained in the
the right-hand side of (\ref{eq:lem:compatibility}).
This follows from the following theorem.

\begin{theorem}[{\cite[Corollary 2.3]{Kuroda}}]\label{thm:vectgp}
For each $n\geq 1$,
the group $\Add _n(K)$
is generated by elementary automorphisms
belonging to $\Add _n(K)$.
\end{theorem}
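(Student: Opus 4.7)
My plan is to identify $\Add_n(K)$ with $\GL_n$ of a non-commutative Euclidean domain and then apply Gaussian elimination. Let $R:=K\{\tau\}$ denote the twisted polynomial ring with defining relation $\tau a=a^p\tau$ for $a\in K$. Since every additive polynomial in $K[x_1,\ldots,x_n]$ is uniquely a $K$-combination of monomials $x_k^{p^j}$, any $n$-tuple $\phi=(f_1,\ldots,f_n)$ of additive polynomials corresponds to a matrix $M_\phi\in M_n(R)$, whose $(i,k)$-entry records the $\tau$-expansion of the $x_k$-summand of $f_i$. Using that Frobenius is an additive ring endomorphism of $K[x_1,\ldots,x_n]$ in characteristic $p$, one checks that composition of $n$-tuples of additive polynomials corresponds to multiplication in $M_n(R)$ (up to the order of factors), so that $\phi\in\Add_n(K)$ iff $M_\phi\in \GL_n(R)$, and $\phi$ is an elementary member of $\Add_n(K)$ iff $M_\phi$ is an elementary matrix (equal to the identity outside one row, with a unit on the diagonal of that row).

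It therefore suffices to show that $\GL_n(R)$ is generated by elementary matrices, for which I would use non-commutative Gaussian elimination. The $\tau$-degree makes $R$ into a right Euclidean domain: degrees add under multiplication because $K$ is a domain, and for $a,b\in R$ with $b\neq 0$ one can write $a=qb+r$ with $\deg_\tau r<\deg_\tau b$, the leading-coefficient condition being solvable in $K$ without any perfectness assumption. Given $M\in \GL_n(R)$, I would iteratively use this division to cancel off-diagonal entries of each column against the pivot, multiplying by elementary matrices at each step and proceeding column by column with strict decrease of $\tau$-degrees so that the process terminates. Since the units of $R$ are exactly $K^\times$, and since the sum of $\tau$-degrees on the diagonal of a diagonal matrix in $\GL_n(R)$ must be zero, one is left after this cancellation with a diagonal matrix whose diagonal entries lie in $K^\times$, and a final pass of elementary scalings $x_l\mapsto ax_l$ reduces it to the identity.

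The main technical obstacle is the non-commutative bookkeeping: because $R$ is non-commutative, a row operation on $M_\phi$ has to be implemented by composing $\phi$ with a specific kind of elementary additive automorphism on a specific side, and I must verify that the right-Euclidean quotient matches the form of the elementary automorphism used. Once the correspondence between composition in $\Add_n(K)$ and multiplication in $M_n(R)$ is pinned down precisely (including whether $\Add_n(K)$ corresponds to $\GL_n(R)$ or its opposite), the elimination is routine. Pulling the resulting factorization back through the correspondence expresses $\phi$ as a composition of elementary members of $\Add_n(K)$, which gives the theorem.
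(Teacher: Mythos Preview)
The paper does not prove this statement; it is quoted as Corollary~2.3 of \cite{Kuroda} and used as a black box in the proof of Proposition~\ref{prop:gdaffine}~(i). So there is no ``paper's own proof'' to compare against.

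That said, your proposed argument is correct and is the classical route to this result. The identification of the monoid of $n$-tuples of additive polynomials with $M_n(K\{\tau\})$, where $K\{\tau\}$ is the Ore extension satisfying $\tau a=a^p\tau$, is standard, and your key observation is the right one: right division $a=qb+r$ in $K\{\tau\}$ only requires dividing by a $p$-th power of the leading coefficient of $b$, so $K\{\tau\}$ is right Euclidean over an arbitrary field $K$ (whereas left Euclideanness would need $K$ perfect). Two small points worth nailing down when you write it out: first, after the column Euclidean algorithm leaves a single nonzero entry $d$, invertibility of the matrix gives $K\{\tau\}\cdot d=K\{\tau\}$ as left ideals, and since $\tau$-degrees add this forces $\deg_\tau d=0$, i.e.\ $d\in K^\times$; second, the bookkeeping you flag indeed yields an anti-isomorphism $\Add_n(K)\to\GL_n(K\{\tau\})$, so that left multiplication by an elementary matrix corresponds to post-composition by an elementary additive automorphism --- harmless, since inverses of elementary automorphisms are again elementary. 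With these in place the Gaussian elimination is routine and your conclusion follows.
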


In fact,
since
$\E _n^i(K)\cap \Aff ^{\rm g}_n(K)$
contains $\E _n^i(K)\cap \Add _n(K)$
for each $i$,
the right-hand side of (\ref{eq:lem:compatibility})
contains
$$
\langle
\E _n^1(K)\cap \Add _n(K),\ldots ,
\E _n^n(K)\cap \Add _n(K)
\rangle ,
$$
and hence contains $\Add _n(K)$
by Theorem~\ref{thm:vectgp}.

Next, we show Proposition~\ref{prop:gdaffine} (ii).
If $n=1$,
then we have $\Aut _K\Kx =\Aff _1(K)$.
Since $\Aff _1(K)\subset \Aff ^{\rm d}_1(K)\subset \Aut _K\Kx $,
it follows that $\Aff ^{\rm d}_1(K)=\Aff _1(K)$.
Thus,
we know that
$$
\Aff ^{\rm d}_1(K)=
\langle
\Aff _1(K), \Aff ^{\rm d}_1(K)\cap \BA _1(K)\rangle .
$$
Therefore,
Proposition~\ref{prop:gdaffine} (ii) holds when $n=1$.

Assume that $n=2$.
The following result is well known.

\begin{lemma}[{van der Kulk~\cite{Kulk}}]\label{lem:Kulk}
If $(f_1,f_2)\in \Aut _KK[x_1,x_2]\sm \Aff _2(K)$
is such that $\deg f_i\leq \deg f_j$
for $i,j\in \{ 1,2\} $ with $i\neq j$,
then
$\deg (f_j-\alpha f_i^{\rm d})<\deg f_j$ holds
for some $\alpha \in K\sm \zs $ and $d\geq 1$.
\end{lemma}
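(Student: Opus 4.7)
My plan is to study the top-degree homogeneous components of $f_1$ and $f_2$ via the Jacobian condition. Set $d_i := \deg f_i$ and let $\bar f_i$ denote the homogeneous component of $f_i$ of degree $d_i$. Since $(f_1, f_2)$ is an automorphism of $K[x_1,x_2]$, the Jacobian determinant $\det J(f_1,f_2)$ lies in $K^{\times}$, while $\det J(\bar f_1,\bar f_2)$ is homogeneous of degree $d_1+d_2-2$. Because $(f_1,f_2)\notin \Aff_2(K)$ we have $\max(d_1,d_2)\geq 2$, and hence $d_1+d_2-2>0$; consequently the putative leading form $\det J(\bar f_1,\bar f_2)$ of the constant $\det J(f_1,f_2)$ must vanish, so $\bar f_1$ and $\bar f_2$ are algebraically dependent over $K$.

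Next, set $g := \gcd(d_1,d_2)$ and $e_i := d_i/g$, so that $\bar f_1^{e_2}$ and $\bar f_2^{e_1}$ are homogeneous of the same degree $g e_1 e_2$. A homogeneous algebraic dependence relation between $\bar f_1$ and $\bar f_2$, obtained by restricting any algebraic relation to a single $w$-homogeneous component (with weights $w(u)=d_1$, $w(v)=d_2$), reduces to a polynomial identity in $u^{e_2}$ and $v^{e_1}$; as both quantities have equal $x$-degree, this collapses to $\bar f_1^{e_2} = \lambda\, \bar f_2^{e_1}$ for some $\lambda \in K^{\times}$. Since $K[x_1,x_2]$ is a UFD and $\gcd(e_1,e_2)=1$, a standard extraction argument then yields $\bar f_i = c_i h^{e_i}$ for $i=1,2$, where $h \in K[x_1,x_2]$ is a homogeneous polynomial and $c_1, c_2 \in K^{\times}$. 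Under the hypothesis $d_i \leq d_j$ we have $e_i \leq e_j$.

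The main obstacle is now to show that the smaller exponent satisfies $e_i = 1$; granted this, $\bar f_j = c_j h^{e_j} = (c_j / c_i^{e_j})\, \bar f_i^{e_j}$, and setting $\alpha := c_j / c_i^{e_j}$ and $d := e_j = d_j/d_i$ immediately yields $\deg(f_j - \alpha f_i^d) < \deg f_j$ as required. To establish $e_i = 1$, I plan to exploit the surjection $K[f_1,f_2]=K[x_1,x_2]$: writing $x_k = P_k(f_1,f_2)$ for some $P_k \in K[u,v]$ and filtering $K[u,v]$ by the weighted degree $w(u)=d_1$, $w(v)=d_2$, each $w$-homogeneous summand of $P_k$ contributes to $P_k(f_1,f_2)$ a top-degree form of the shape $(\text{scalar})\cdot h^N$, via the identity $\bar f_1^a \bar f_2^b = c_1^a c_2^b h^{ae_1 + be_2}$. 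Tracking the inevitable cancellations in the highest $w$-strata together with subleading corrections coming from $f_i - \bar f_i$, one deduces that the prescribed degree-one leading form $x_k$ can only arise if $\deg h = 1$ and $e_i = 1$; this final, technically delicate analysis is the heart of van der Kulk's theorem.
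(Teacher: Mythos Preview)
The paper does not prove this lemma; it is quoted as a well-known result of van der Kulk and used as a black box, so there is no in-paper argument to compare yours against.

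Your outline has a genuine gap in the setting of Section~2, where $\mathrm{char}\,K=p>0$. From $\det J(\bar f_1,\bar f_2)=0$ you conclude that $\bar f_1$ and $\bar f_2$ are algebraically dependent, but this implication---the Jacobian criterion for dependence---fails in positive characteristic: for instance $x_1^{\,p}$ and $x_2$ have Jacobian determinant $0$ yet are algebraically independent. The desired dependence of $\bar f_1,\bar f_2$ is still true, but it must be obtained without derivatives. One characteristic-free route is to write $x_1=P(f_1,f_2)$, grade $K[u,v]$ by $w(u)=d_1$, $w(v)=d_2$, and observe that when $d_i\ge 2$ the top $w$-homogeneous piece $\bar P$ of $P$ satisfies $\bar P(\bar f_1,\bar f_2)=0$, since otherwise $\deg P(f_1,f_2)\ge d_i\ge 2>1=\deg x_1$. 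This is essentially the filtration you already invoke in your step~5, so it can be recycled to repair the dependence step. Separately, your step~5 (forcing $e_i=1$, equivalently $\deg h=1$) is only described as ``technically delicate'' and not actually carried out; that is where the real content of van der Kulk's theorem lies, and it too requires a characteristic-free treatment rather than Jacobian-based reasoning.
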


The following refinement of the lemma above
plays a crucial role in proving Proposition~\ref{prop:gdaffine} (ii)
in the case of $n=2$.

\begin{lemma}\label{lem:pKulk}
If $(f_1,f_2)\in \Aff ^{\rm d}_2(K)$
is such that $\deg f_i<\deg f_j$
for $i,j\in \{ 1,2\} $ with $i\neq j$,
then $\deg (f_j-\alpha f_i^{\rm d})<\deg f_j$
holds for some $\alpha \in K\sm \zs $ and
$d\in p\Z $ with $d>1$.
\end{lemma}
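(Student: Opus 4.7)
The plan is to apply Lemma~\ref{lem:Kulk} to produce the exponent $d$, then use the constancy of the Jacobian (the defining property of $\Aff^{\rm d}_2(K)$) to upgrade $d$ to a multiple of $p$.

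First, since $\deg f_i<\deg f_j$ forces $\deg f_j\geq 2$, the automorphism $\phi=(f_1,f_2)$ is not affine and Lemma~\ref{lem:Kulk} applies, yielding $\alpha\in K\sm\zs$ and $d\geq 1$ with $\deg(f_j-\alpha f_i^d)<\deg f_j$. Comparing leading homogeneous parts gives $\bar f_j=\alpha\bar f_i^d$, and comparing degrees gives $d=\deg f_j/\deg f_i\geq 2$, so $d>1$ is automatic; it remains to prove $d\in p\Z$. The $\Aff^{\rm d}_2(K)$-hypothesis says every $\partial f_k/\partial x_l$ lies in $K$; when $\deg f_k\geq 2$, the nonvanishing of $\partial\bar f_k/\partial x_l$ would produce a homogeneous polynomial of positive degree $\deg f_k-1$, contradicting constancy, so $\partial\bar f_k/\partial x_l=0$ for $l=1,2$, equivalently $\bar f_k\in K[x_1^p,x_2^p]$. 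In particular $\bar f_j\in K[x_1^p,x_2^p]$, and also $\bar f_i\in K[x_1^p,x_2^p]$ whenever $\deg f_i\geq 2$.

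If $\deg f_i=1$, then $\bar f_i=ax_1+bx_2$ is a nonzero linear form, and dividing by $\alpha\neq 0$ gives $\bar f_i^d\in K[x_1^p,x_2^p]$. When $ab\neq 0$, the coefficient of $x_1x_2^{d-1}$ in the binomial expansion of $\bar f_i^d$ is $d\,ab^{d-1}$; since the bi-exponent $(1,d-1)$ is not in $p\Z\times p\Z$, this coefficient must vanish in $K$, which forces $d=0$ in $K$, i.e.\ $p\mid d$. When $a=0$ or $b=0$, $\bar f_i^d$ reduces to a pure monomial whose inclusion in $K[x_1^p,x_2^p]$ again requires $p\mid d$. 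This settles the case $\deg f_i=1$.

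The case $\deg f_i\geq 2$ is the main obstacle, because both $\bar f_i$ and $\bar f_j$ already lie in $K[x_1^p,x_2^p]$, making the leading-part condition alone consistent with any $d$. The plan here is to induct on $\deg f_i$. One writes $f_k=\ell_k+g_k$ with $g_k\in K[x_1^p,x_2^p]$ and, after passing to the perfect closure $\bar K$, factors $g_k=R_k^p$ with $R_k\in\bar K[x_1,x_2]$ of degree $\deg f_k/p$. The goal is to produce from $\phi$ an auxiliary automorphism in $\Aff^{\rm d}_2$ of strictly smaller degree whose leading parts still satisfy a relation with exponent $d$, so the inductive hypothesis yields $p\mid d$. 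In simple examples (such as $\phi=(x_1+x_2^p,\,x_2+x_1^p+x_2^{p^2})$) the Frobenius pre-image $(R_1,R_2)$ is itself such an automorphism and the induction closes immediately. Establishing this reduction in general---ruling out the degenerate case where $(R_1,R_2)$ fails to be an automorphism---is the crux, and requires using the full automorphism property of $\phi$ beyond what the $\Aff^{\rm d}_2$-condition on $(g_1,g_2)$ alone provides.
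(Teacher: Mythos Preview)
Your argument is incomplete, and you say so yourself: the case $\deg f_i\geq 2$ is left as a plan whose ``crux'' you have not carried out. The proposed Frobenius descent to an auxiliary automorphism $(R_1,R_2)$ is not obviously well-founded; nothing you have written forces $(R_1,R_2)$ to be an automorphism (or to lie in $\Aff^{\rm d}_2$), and without that the induction does not close. So as it stands this is not a proof.

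The paper avoids the induction entirely, and the trick is worth knowing. Two ideas replace your leading-part analysis. First, one applies Lemma~\ref{lem:Kulk} \emph{repeatedly}, not just once: there exist $\alpha_2,\ldots,\alpha_d\in K$ with $\alpha_d\neq 0$ such that
\[
g:=f_j-\alpha_d f_i^{\,d}-\alpha_{d-1}f_i^{\,d-1}-\cdots-\alpha_2 f_i^{\,2}
\quad\text{satisfies}\quad \deg g\leq \deg f_i.
\]
Second, one uses a consequence of $J(f_1,f_2)\in\GL_2(K)$ that you did not exploit: not only are all $\partial f_k/\partial x_l$ constants, but invertibility forces $\partial f_i/\partial x_m\neq 0$ for some $m$. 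Now differentiate $g$ with respect to $x_m$. If $p\nmid d$, the chain-rule term $(d\alpha_d f_i^{\,d-1}+\cdots+2\alpha_2 f_i)\,\partial f_i/\partial x_m$ has degree $(d-1)\deg f_i\geq \deg f_i$, while $\partial f_j/\partial x_m$ is a constant, so $\deg(\partial g/\partial x_m)\geq\deg f_i$. But $\deg(\partial g/\partial x_m)\leq\deg g-1\leq\deg f_i-1$, a contradiction. Hence $p\mid d$.

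Note how this sidesteps exactly the obstacle you ran into: working only with the leading homogeneous part of $f_i$ throws away the information that $f_i$ has a nonzero (constant) partial derivative, which is what makes the differentiation argument bite. The full reduction of $f_j$ down to degree $\leq\deg f_i$, rather than a single cancellation of top degree, is what lets that nonvanishing constant survive as the dominant factor after differentiating.
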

\begin{proof}\rm
By using Lemma~\ref{lem:Kulk} repeatedly,
we can get $\alpha _1,\ldots ,\alpha _d\in K$
with $\alpha _d\neq 0$
such that
\begin{equation}\label{eq:pKulk}
\deg (f_j-\alpha _df_i^{\rm d}-\alpha _{d-1}f_i^{d-1}
-\cdots -\alpha _2f_i^2)\leq \deg f_i.
\end{equation}
Then,
we have
$\deg (f_j-\alpha _df_i^{\rm d})<\deg f_j$
and $d=\deg f_j/\deg f_i\geq 2$,
since $\deg f_i<\deg f_j$ by assumption.
We show that $d$ is divisible by $p$
by contradiction.
Since $J(f_1,f_2)$ belongs to $\GL _2(K)$,
we have $\partial f_k/\partial x_l\in K$
for each $k$, $l$,
and $\partial f_i/\partial x_m\neq 0$
for some $m$.
Now,
suppose to the contrary that
$d$ is not divisible by $p$.
Then,
we have
$$
\deg
(d\alpha _df_i^{d-1}
+(d-1)\alpha _{d-1}f_i^{d-2}
+\cdots +2\alpha _2f_i)=(d-1)\deg f_i\geq \deg f_i.
$$
Put
$g=f_j-\alpha _df_i^{\rm d}-\alpha _{d-1}f_i^{d-1}
-\cdots -\alpha _2f_i^2$.
Since $\partial f_j/\partial x_m$
and $\partial f_i/\partial x_m$ belong to
$K$ and $K\sm \zs $, respectively,
it follows that
\begin{align*}
{\rm deg}{\partial g\over\partial x_m}
&=\deg \left(
{\partial f_j\over\partial x_m}
-( d\alpha _df_i^{d-1}
+(d-1)\alpha _{d-1}f_i^{d-2}
+\cdots +2\alpha _2f_i){\partial f_i\over\partial x_m}
\right) \\
&={\rm deg} ( d\alpha _df_i^{d-1}
+(d-1)\alpha _{d-1}f_i^{d-2}
+\cdots +2\alpha _2f_i)
\geq {\rm deg} f_i.
\end{align*}
On the other hand,
we have
$$
\deg{\partial g\over\partial x_m}
\leq \deg g-1\leq \deg f_i-1
$$
by (\ref{eq:pKulk}).
This is a contradiction.
Therefore,
$d$ is divisible by $p$.
\end{proof}

Now, let us complete the proof of
Proposition~\ref{prop:gdaffine} (ii) for $n=2$.
Thanks to Lemma~\ref{lem:compatibility},
we have only to check that
$\Aff ^{\rm d}_2(K)$ is equal to
$$
\ca ':=\langle
\Aff _2(K),
(\Aff ^{\rm d}_2(K)\cap \E ^1_2(K)),
(\Aff ^{\rm d}_2(K)\cap \E ^2_2(K))
\rangle .
$$
Clearly,
$\Aff ^{\rm d}_2(K)$ contains $\ca '$.
We prove the reverse inclusion by contradiction.
Suppose that there exists $F=(f_1,f_2)\in \Aff ^{\rm d}_2(K)$
not belonging to $\ca '$.
Without loss of generality,
we may assume that $\deg f_1\leq \deg f_2$,
and $\deg F:=\deg f_1+\deg f_2$
is minimal among such $F$'s.
Since $F$ does not belong to $\ca '$ by supposition,
$F$ does not belong to $\Aff _2(K)$.
Hence,
there exist $\alpha \in K\sm \zs $
and $d\geq 1$ with $d=1$ or $p\mid d$
such that
$\deg (f_2-\alpha f_1^{\rm d})<\deg f_2$
by Lemmas~\ref{lem:Kulk} and \ref{lem:pKulk}.
Then,
$$
E:=(x_1,x_2-\alpha x_1^{\rm d})
$$
belongs to $\Aff ^{\rm d}_2(K)\cap \E ^1_2(K)$,
and hence to $\ca '$.
Since $F$ is an element of $\Aff ^{\rm d}_2(K)\sm \ca '$,
it follows that
so is $F\circ E=(f_1,f_2-\alpha f_1^{\rm d})$.
On the other hand,
we have
$$
\deg F\circ E=\deg f_1+\deg (f_2-\alpha f_1^{\rm d})
<\deg f_1+\deg f_2=\deg F.
$$
This contradicts the minimality of $F$.
Thus, $\Aff ^{\rm d}_2(K)$ is contained in $\ca '$,
proving $\Aff ^{\rm d}_2(K)=\ca '$.
This completes the proof of Proposition~\ref{prop:gdaffine} (ii).

\section{Amalgamated product}

Throughout this section, ${\cal G}$ is a group and ${\cal A}$ and ${\cal B}$ are two subgroups of ${\cal G}$
such that ${\cal G}=\langle{\cal A},{\cal B}\rangle$.\\

{\bf A. Generalities}\\

A sequence $\al=(\al_1,\ldots,\al_l)$ of elements of ${\cal A}\cup{\cal B}$ is called an $({\cal A},{\cal B})$\textit{-word}.
The integer $l$ is called the \textit{length} of $\al$, denoted by ${\ell}(\al)$. The empty word is the only word which have length $0$.
When $l\ge 1$, we define ${\bf h}(\al)=\al_1$ and ${\bf t}(\al)=\al_l$.\\

Given two $({\cal A},{\cal B})$-words $\al=(\al_1,\ldots,\al_l)$ and $\beta=(\beta_1,\ldots,\beta_m)$,
we denote by $\al\beta=(\al_1,\ldots,\al_l,\beta_1,\ldots,\beta_m)$ the {\it concatenation} of $\alpha$ and $\beta$.
We denote by $W({\cal A},{\cal B})$ the monoid of all $({\cal A},{\cal B})$-words endowed with the concatenation and
by $\pi:W({\cal A},{\cal B})\to{\cal G}$ the canonical surjective homomorphism of monoids. Given an element $\gamma\in{\cal G}$,
any element in $\pi^{-1}(\gamma)$ is called an $({\cal A},{\cal B})$\textit{-decomposition} of $\gamma$.\\

Let $\al=(\al_1,\ldots,\al_l)$ and $\beta=(\beta_1,\ldots,\beta_m)$ be two $({\cal A},{\cal B})$-words, we define the following  relation:
$\al\thickapprox_{{\cal A}\cap{\cal B}}\beta$ if $l=m$ and there exist $\eta_1,\ldots,\eta_{l-1}\in{\cal A}\cap{\cal B}$ such that $\beta_1=\al_1\eta_1$, $\beta_i=\eta_{i-1}^{-1}\al_i\eta_i$ for all $i\in\{2,\ldots,l-1\}$ and $\beta_l=\eta_{l-1}^{-1}\al_l$.

\begin{remark}\label{equi}
The relation $\thickapprox_{{\cal A}\cap{\cal B}}$ is an equivalent relation. If $\al$ and $\beta$ are two words such that 
$\al\thickapprox_{{\cal A}\cap{\cal B}}\beta$ then $\pi(\al)=\pi(\beta)$.
\end{remark}

An $({\cal A},{\cal B})$-word $\al=(\al_1,\ldots,\al_l)$ is said to be \textit{reduced} if $l\ge 1$ and if $\al_i\in({\cal A}\pri{\cal B})\cup({\cal B}\pri{\cal A})$ for all $i\in\{1,\ldots,l\}$ and $\al_i\al_{i+1}\not\in{\cal A}\cup{\cal B}$ for all $i\in\{1,\ldots,l-1\}$.

\begin{remark}
a)  Let $\al$ be a reduced $({\cal A},{\cal B})$-word. Every $({\cal A},{\cal B})$-word $\beta$ such that
$\al\thickapprox_{{\cal A}\cap{\cal B}}\beta$ is also a reduced $({\cal A},{\cal B})$-word.\\
b) Given an element $\gamma\in{\cal G}\pri{\cal A}\cap{\cal B}$,
any element in $\pi^{-1}(\gamma)$ with a minimal length in $\pi^{-1}(\gamma)$ is a reduced $({\cal A},{\cal B})$-decomposition of $\gamma$.
\end{remark}

Given two reduced $({\cal A},{\cal B})$-words $\al=(\al_1,\ldots,\al_l)$ and $\beta=(\beta_1,\ldots,\beta_m)$,
we set $\delta=\delta(\al,\beta)={\bf t}(\al){\bf h}(\beta)=\al_l\beta_1$ and we define:
$$\al*\beta=\left\{\begin{array}{ccc}
                 \al\beta=(\al_1,\ldots,\al_l,\beta_1,\ldots,\beta_m)&{\rm if} & \delta\not\in{\cal A}\cup{\cal B}\\
                    (\al_1,\ldots,\al_{l-1},\al_l\beta_1,\beta_2\ldots,\beta_m) &{\rm if} &\delta\in{\cal A}\cup{\cal B}.
                   \end{array}
\right.$$

\begin{remark}\label{star} 
Assume that 
$\delta(\al,\beta)\not\in{\cal A}\cap{\cal B}$. 
Then, the following statements hold:\\
a) $\al*\beta$ is a reduced $({\cal A},{\cal B})$-word and
${\ell}(\al*\beta)\ge {\ell}(\al)+{\ell}(\beta)-1$,\\
b) if ${\ell}(\al)\ge 2$ then ${\bf h}(\al*\beta)={\bf h}(\al)$ and if ${\ell}(\beta)\ge 2$ then ${\bf t}(\al*\beta)={\bf t}(\beta)$.
\end{remark}

Since ${\cal G}=\langle {\cal A},{\cal B}\rangle $ 
by assumption, 
${\cal G}$ is the amalgamated product of 
${\cal A}$ and ${\cal B}$ over the intersection
${\cal A}\cap{\cal B}$ 
if and only if, 
for all $({\cal A},{\cal B})$-reduced words $\al$, 
we have $\pi(\al)\not=1$.
For a general definition of the amalgamated product see~\cite{Magnus}.

\begin{remark}\label{dequi}
This property is equivalent to say that if $\al$ and $\beta$ are two $({\cal A},{\cal B})$-reduced words such $\pi(\al)=\pi(\beta)$ then $\al\thickapprox_{{\cal A}\cap{\cal B}}\beta$.
\end{remark}

If ${\cal G}={\cal A}*_{\cap}{\cal B}$ then, given an element $\gamma\in{\cal G}\pri{\cal A}\cap{\cal B}$, 
all the $({\cal A},{\cal B})$-reduced decompositions of $\gamma$ have the same length and we call this length the $({\cal A},{\cal B})$\textit{-length of} $\gamma$ and we denote by ${\ell}(\gamma)$.
The $({\cal A},{\cal B})$-length of an element in ${\cal A}\cap{\cal B}$ is $0$.

\begin{remark}\label{redlg}
Let $\al$ be a reduced $({\cal A},{\cal B})$-decomposition of $\gamma\in{\cal G}\pri{\cal A}\cap{\cal B}$.
Let $\eta\in{\cal G}$ be an element of ${\cal G}$.\\
a) If $\eta{\bf h}(\al)\in{\cal A}\cap{\cal B}$ then ${\ell}(\eta\gamma)<{\ell}(\gamma)$.\\
b) If ${\bf t}(\al)\eta\in{\cal A}\cap{\cal B}$ then ${\ell}(\gamma\eta)<{\ell}(\gamma)$.\\
\end{remark}

{\bf B. Compatibility}\\

From now and until the end of this section, we assume that ${\cal G}={\cal A}*_{\cap}{\cal B}$.
We consider a subgroup ${\cal H}$ of ${\cal G}$ such that the following two properties are satisfied:
$$(i)\,\,{\cal A}\cap{\cal B}\subset{\cal H}\hspace{.3cm}{\rm and}\hspace{.3cm}
(ii)\,\,{\cal H}=\langle{\cal A}\cap{\cal H},{\cal B}\cap{\cal H}\rangle.$$
We call such a subgroup ${\cal H}$ an $({\cal A},{\cal B})$\textit{-compatible} subgroup of ${\cal G}$.\\

We consider also a subgroup $G$ of ${\cal G}$ and we use the following notation:
\begin{center}
$A={\cal A}\cap G$,\ \ $B={\cal B}\cap G$,\ \ $H={\cal H}\cap G$\ \ and\ \ $T=\langle A,B\rangle$.
\end{center}

\begin{remark}\label{ABred}
Any reduced $(A,B)$-word (resp.\ $({\cal A}\cap{\cal H},{\cal B}\cap{\cal H})$-word)
is also a reduced $({\cal A},{\cal B})$-word.
\end{remark}

\begin{proof} This follows from the inclusions $A\pri B\subset {\cal A}\pri{\cal B}$ and $B\pri A\subset {\cal B}\pri{\cal A}$.
\end{proof}

\begin{remark}\label{lgle2}
For all $g\in G\pri T$, we have $\ell(g)\ge 2$.
\end{remark}

\begin{proof} If $g\in G$ is such that $\ell(g)\le 1$ then $g\in G\cap({\cal A}\cup{\cal B})=A\cup B\subset T$.
\end{proof}

Let $\al=(\al_1,\ldots,\al_n)$ be a reduced $(H,T)$-word. We consider the following sets
$I_T(\al)=\{i\in\{1,\ldots,n\}\,;\,\al_i\in T\}$ and $I_H(\al)=\{i\in\{1,\ldots,n\}\,;\,\al_i\in H\}$.
For all $i\in\{1,\ldots,n\}$ we denote by $l_i={\ell}(\al_i)$ the $({\cal A},{\cal B})$-length of $\al_i$.
We define $L(\al)=\sum_{i\in I_H(\al)}l_i$.
We denote by ${\cal E}(\al)$ the set of all $(H,T)$-reduced words $\beta$ such that $\alpha\thickapprox_{H\cap T}\beta$.
We say that $\al$ is $({\cal A},{\cal B})$\textit{-minimal} if $L(\al)$ is minimal in the set
$L({\cal E}(\al))$.\\

Here is the main result of this section:

\begin{theorem}\label{thm}
Let $\al=(\al_1,\ldots,\al_n)$ be an $(H,T)$-reduced and $({\cal A},{\cal B})$-minimal word. 
For all $i\in I_T(\al)$ (resp.\ $i\in I_H(\al)$), let $D_i$ be a reduced 
$(A,B)$-decomposition (resp.\ a reduced $({\cal A}\cap{\cal H},{\cal B}\cap{\cal H})$-decomposition) 
of $\al_i$. 
Then, $D_1*\cdots*D_n$ is a reduced $({\cal A},{\cal B})$-decomposition of $\pi(\al)$.
\end{theorem}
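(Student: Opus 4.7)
The plan is to prove by induction on $k$ that each partial product $E_k:=D_1*\cdots *D_k$ is a reduced $({\cal A},{\cal B})$-word; the desired conclusion then follows from $\pi(E_n)=\pi(D_1)\cdots \pi(D_n)=\al_1\cdots \al_n=\pi(\al)$. Each $D_i$ is itself a reduced $({\cal A},{\cal B})$-word by Remark~\ref{ABred}, so $E_1=D_1$ starts the induction. For the inductive step, Remark~\ref{star} reduces the task to showing that the boundary product $\delta_k:={\bf t}(E_{k-1}){\bf h}(D_k)$ does not lie in ${\cal A}\cap {\cal B}$.

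The first observation is that Remark~\ref{lgle2} forces $\ell(D_j)\ge 2$ whenever $j\in I_H(\al)$, because then $\al_j\in H\pri T\subset G\pri T$. Combined with the alternation of $I_H(\al)$ and $I_T(\al)$ forced by the $(H,T)$-reducedness of $\al$, this bounds how deeply a cascade of merges in $*$ can propagate: ${\bf t}(E_{k-1})$ equals either (i)~${\bf t}(D_{k-1})$, or (ii)~the cascade product ${\bf t}(D_{k-2})\al_{k-1}$ in the single situation where $\ell(D_{k-1})=1$ (so $k-1\in I_T(\al)$, whence $k-2,k\in I_H(\al)$) and the merge at the boundary $(k-2,k-1)$ actually occurred; no deeper cascade is possible since $\ell(D_{k-2})\ge 2$.

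Suppose for contradiction that $\delta_k\in {\cal A}\cap {\cal B}\subset {\cal H}$. In case (ii), ${\bf t}(D_{k-2}),{\bf h}(D_k)\in {\cal H}$ yield $\al_{k-1}={\bf t}(D_{k-2})^{-1}\delta_k{\bf h}(D_k)^{-1}\in {\cal H}\cap G=H$, contradicting $\al_{k-1}\in T\pri H$. In case (i), say $k-1\in I_T(\al)$ and $k\in I_H(\al)$ (the opposite orientation is symmetric): then ${\bf h}(D_k)\in {\cal H}$ gives ${\bf t}(D_{k-1})\in H\cap T$. If $\ell(D_{k-1})=1$, this yields $\al_{k-1}={\bf t}(D_{k-1})\in H\cap T$, contradicting $(H,T)$-reducedness. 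Otherwise I invoke the $({\cal A},{\cal B})$-minimality of $\al$: set $\eta_{k-1}:={\bf t}(D_{k-1})^{-1}\in H\cap T$ and all other $\eta_j:=1$, and let $\beta$ be the corresponding word, so that $\beta_{k-1}=\al_{k-1}{\bf t}(D_{k-1})^{-1}$, $\beta_k={\bf t}(D_{k-1})\al_k$, and $\beta_j=\al_j$ otherwise.

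To close the contradiction I verify that $\beta\in{\cal E}(\al)$: $\beta_{k-1}\in T\pri H$ (else $\al_{k-1}\in H$), $\beta_k\in H\pri T$ (else $\al_k\in T$), and the modified boundaries $\beta_{k-2}\beta_{k-1}$ and $\beta_k\beta_{k+1}$ remain outside $H\cup T$. Because ${\bf t}(D_{k-1}){\bf h}(D_k)=\delta_k\in{\cal A}\cap{\cal B}$, Remark~\ref{redlg}(a) applied with $\eta={\bf t}(D_{k-1})$ and $\gamma=\al_k$ gives $\ell(\beta_k)<l_k$, and since $I_H(\beta)=I_H(\al)$ and all other summands of $L$ are unchanged, $L(\beta)<L(\al)$, contradicting minimality. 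The main obstacle I foresee is this two-layer case split: $(H,T)$-reducedness resolves the cascade case (ii) and the degenerate sub-case $\ell(D_{k-1})=1$ of case (i), while only minimality handles the generic sub-case; verifying that $\beta$ really stays $(H,T)$-reduced is routine but requires care.
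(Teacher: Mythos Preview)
Your proof is correct and follows essentially the same approach as the paper's: both identify the first boundary where the $*$-product could fail to be reduced, then split into the two cases ${\bf t}(E_{k-1})={\bf t}(D_{k-1})$ versus ${\bf t}(E_{k-1})={\bf t}(D_{k-2})\al_{k-1}$, deriving a contradiction with $({\cal A},{\cal B})$-minimality in the first case and with $(H,T)$-reducedness (via $\al_{k-1}\in H$) in the second. Your treatment is in fact more careful than the paper's on one point: you explicitly isolate the degenerate sub-case $\ell(D_{k-1})=1$ of case~(i) and dispose of it directly, whereas the paper constructs $\beta$ uniformly and asserts $\beta\in{\cal E}(\al)$ without noting that in this degenerate sub-case $\beta_{k-1}=1$ would fail to be reduced (the sub-case is in fact vacuous, as you observe, so the paper's argument is sound but terse).
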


\begin{proof} By Remark~\ref{ABred}, $D_i$ is also a reduced $({\cal A},{\cal B})$-word for all $i\in\{1,\ldots,n\}$.
We assume, by contradiction, that $D_1*\cdots*D_n$ is not a reduced $({\cal A},{\cal B})$-decomposi\-tion of $\pi(\al)$.
There exists $i\in\{1,\ldots,n-1\}$ such that $D:=D_1*\cdots*D_i$ is a reduced $({\cal A},{\cal B})$-decomposition of $\al_1\ldots\al_i$
but $D*D_{i+1}$ is not a reduced $({\cal A},{\cal B})$-decomposition of $\al_1\ldots\al_{i+1}$.
By Remark~\ref{star} a), we have: ${\bf t}(D){\bf h}(D_{i+1})\in{\cal A}\cap{\cal B}\,(\dag)$.\\
$\circ$ If ${\bf t}(D)={\bf t}(D_i)$ then, using $(\dag)$ and (i) of the compatibility assumption,
we have ${\bf t}(D_i){\bf h}(D_{i+1})\in{\cal A}\cap{\cal B}\subset{\cal H}$.
If $i\in I_T(\al)$ and $i+1\in I_H(\al)$ (resp.\ $i\in I_H(\al)$ and $i+1\in I_T(\al)$)
then $\eta:={\bf t}(D_i)\in H\cap T$ (resp.\ $\eta:={\bf h}(D_{i+1})^{-1}\in H\cap T$).
The $({\cal A},{\cal B})$-word $\beta=(\al_1,\ldots,\al_i\eta^{-1},\eta\al_{i+1},\ldots,\al_n)$ belongs to ${\cal E}(\al)$ and is such that $L(\beta)<L(\al)$ since we have ${\ell}(\eta\al_{i+1})<{\ell}(\al_{i+1})$ by Remark~\ref{redlg}~a)
(resp.\ ${\ell}(\al_i\eta)<{\ell}(\al_i)$ by Remark~\ref{redlg}~b)),
which contradict the $({\cal A},{\cal B})$-minimality of $\al$.\\
$\circ$ If ${\bf t}(D)\not={\bf t}(D_i)$ then trivially $i\ge 2$ and by Remark~\ref{star} b), $l_i=1$. Using Remark~\ref{lgle2}, this implies
$i\in I_T(\al)$ and $D_i=(\al_i)$ with $\al_i\in A\cup B\pri H$.
Hence $i-1\in I_H(\al)$ and ${\bf t}(D_1*\cdots*D_{i-1})={\bf t}(D_{i-1})$ and ${\bf t}(D)={\bf t}(D_{i-1})\al_i$.
Using $(\dag)$ and (i) of the compatibility assumption, we deduce ${\bf t}(D_{i-1})\al_i{\bf h}(D_{i+1})\in{\cal H}$ and then $\al_i\in H$ which is impossible.\\
\end{proof}

{\bf C. Consequences}\\

The following corollary gives the structure of the group $\langle H,T\rangle$.

\begin{corollary}\label{product}
We have $\langle H,T\rangle=H*_{\cap}T$.
\end{corollary}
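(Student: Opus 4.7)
The plan is to show that $\pi(\alpha)\neq 1$ for every reduced $(H,T)$-word $\alpha$, which is exactly the assertion $\langle H,T\rangle=H*_{\cap}T$. I will do this by picking a suitable representative in the $\thickapprox_{H\cap T}$-class of $\alpha$ and then applying Theorem~\ref{thm} to transport the question to the ambient amalgamated product ${\cal G}={\cal A}*_{\cap}{\cal B}$.

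Concretely, given a reduced $(H,T)$-word $\alpha=(\alpha_1,\ldots,\alpha_n)$, the function $L$ on ${\cal E}(\alpha)$ takes values in $\N$, so I can choose an $({\cal A},{\cal B})$-minimal $\alpha'=(\alpha'_1,\ldots,\alpha'_n)\in{\cal E}(\alpha)$. By Remark~\ref{equi} we have $\pi(\alpha')=\pi(\alpha)$, reducing us to showing $\pi(\alpha')\neq 1$.

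Next I need to produce the reduced decompositions $D_i$ demanded by Theorem~\ref{thm}. Intersecting the compatibility inclusion ${\cal A}\cap{\cal B}\subset{\cal H}$ with $G$ gives $A\cap B\subset H\cap T$, and also $({\cal A}\cap{\cal B})\cap G=A\cap B\subset T$. Thus for $i\in I_T(\alpha')$ the factor $\alpha'_i\in T\setminus H$ lies in $T\setminus(A\cap B)$, so since $T=\langle A,B\rangle$ any minimal-length $(A,B)$-decomposition of $\alpha'_i$ is automatically reduced, which supplies $D_i$. For $i\in I_H(\alpha')$ the factor $\alpha'_i\in H\setminus T$ lies in ${\cal H}\setminus({\cal A}\cap{\cal B})$, and since ${\cal H}=\langle{\cal A}\cap{\cal H},{\cal B}\cap{\cal H}\rangle$ by compatibility, a minimal-length $({\cal A}\cap{\cal H},{\cal B}\cap{\cal H})$-decomposition is again reduced, which supplies $D_i$.

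Theorem~\ref{thm} then asserts that $D_1*\cdots*D_n$ is a reduced $({\cal A},{\cal B})$-decomposition of $\pi(\alpha')$. Since ${\cal G}={\cal A}*_{\cap}{\cal B}$, this forces $\pi(\alpha')\neq 1$, hence $\pi(\alpha)\neq 1$. The substantive content has already been absorbed into Theorem~\ref{thm}, so the main obstacle here is a bookkeeping one: recognising that the inclusion ${\cal A}\cap{\cal B}\subset{\cal H}$ in the compatibility hypothesis is exactly what allows each $\alpha'_i$ to sit outside the relevant intersection, which in turn guarantees that a nonempty reduced decomposition $D_i$ actually exists.
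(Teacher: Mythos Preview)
Your proof is correct and follows essentially the same approach as the paper: pass to an $({\cal A},{\cal B})$-minimal representative in ${\cal E}(\alpha)$, invoke Theorem~\ref{thm} to obtain a reduced $({\cal A},{\cal B})$-decomposition $D_1*\cdots*D_n$ of $\pi(\alpha)$, and conclude $\pi(\alpha)\neq 1$ from ${\cal G}={\cal A}*_{\cap}{\cal B}$. The paper additionally records the length estimate $\ell(D_1*\cdots*D_n)\geq\sum_i l_i-(n-1)\geq 1$ via Remark~\ref{star}~a), while you instead spell out why each $D_i$ exists as a nonempty reduced word; since reduced words have length $\geq 1$ by definition, both conclusions are immediate.
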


\begin{proof} Let $\al=(\al_1,\ldots,\al_n)$ be an $(H,T)$-reduced word.
By Remark~\ref{equi}, the property we want to prove ($\pi(\al)=1$) is true for one element of ${\cal E}(\al)$
if and only if is true for all elements of ${\cal E}(\al)$. Hence, we can assume that $\al$ is $({\cal A},{\cal B})$-minimal.
For all $i\in I_T(\al)$ (resp.\ $i\in I_H(\al)$), 
let $D_i$ be a reduced
$(A,B)$-decomposition (resp.\ a reduced $({\cal A}\cap{\cal H},{\cal B}\cap{\cal H})$-decomposition).
By Theorem~\ref{thm}, $D_1*\cdots*D_n$ is a reduced $({\cal A},{\cal B})$-decomposition of $\pi(\al)$.
By Remark~\ref{star}~a) applied $n-1$ times,
$${\ell}(D_1*\cdots*D_n)\ge\sum_{i=1}^nl_i-(n-1)\ge 1.$$
We deduce $\pi(\al)\ne 1$ using ${\cal G}={\cal A}*_{\cap}{\cal B}$.\\
\end{proof}

The following corollary can be used to prove that an element $h\in G$ is not in $\langle H,T\rangle$
using a reduced $({\cal A},{\cal B})$-decomposition of $h$.

\begin{corollary}\label{criterion}
Let $h$ be an element of $\langle H,T\rangle$ and let $\theta=(\theta_1,\ldots,\theta_m)$ be a reduced $({\cal A},{\cal B})$-decomposition of $h$.
Let $i\in\{1,\ldots,n\}$ be such that $\theta_i\in{\cal B}$. 
Then:\\
1) If $i=1$ then $\theta_1\in B({\cal B}\cap{\cal H})$.\\
2) If $i=m$ then $\theta_m\in ({\cal B}\cap{\cal H})B$.\\
3) If $i\in\{2,\ldots,m-1\}$ then $\theta_i\in ({\cal B}\cap{\cal H})B({\cal B}\cap{\cal H})$.
\end{corollary}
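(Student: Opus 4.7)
The plan is to apply Theorem~\ref{thm} to a carefully chosen reduced $(H,T)$-decomposition of $h$ in order to manufacture a reduced $({\cal A},{\cal B})$-decomposition of $h$, and then to compare it with the given $\theta$ using the fact that reduced $({\cal A},{\cal B})$-decompositions in ${\cal G}={\cal A}*_{\cap}{\cal B}$ are unique up to the equivalence $\thickapprox_{{\cal A}\cap{\cal B}}$ (Remark~\ref{dequi}).

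Since $h$ admits a reduced $({\cal A},{\cal B})$-decomposition, $h\notin{\cal A}\cap{\cal B}$. By Corollary~\ref{product} we have $\langle H,T\rangle=H*_{\cap}T$, so we may pick a reduced $(H,T)$-word $\al=(\al_1,\ldots,\al_n)$ with $\pi(\al)=h$, further chosen to be $({\cal A},{\cal B})$-minimal. For each $j$ we take a reduced $(A,B)$-decomposition $D_j$ of $\al_j$ when $j\in I_T(\al)$, and a reduced $({\cal A}\cap{\cal H},{\cal B}\cap{\cal H})$-decomposition $D_j$ when $j\in I_H(\al)$. Theorem~\ref{thm} then asserts that $\tau:=D_1*\cdots*D_n$ is a reduced $({\cal A},{\cal B})$-decomposition of $h$. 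By Remark~\ref{dequi}, $\theta\thickapprox_{{\cal A}\cap{\cal B}}\tau$; in particular $\tau$ has length $m$ and there exist $\eta_1,\ldots,\eta_{m-1}\in{\cal A}\cap{\cal B}$ with $\theta_1=\tau_1\eta_1$, $\theta_i=\eta_{i-1}^{-1}\tau_i\eta_i$ for $2\le i\le m-1$, and $\theta_m=\eta_{m-1}^{-1}\tau_m$. Since ${\cal A}\cap{\cal B}\subset{\cal B}\cap{\cal H}$ by compatibility property (i), it suffices to locate each $\tau_i\in{\cal B}$ inside the corresponding product set.

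The key structural observation I would establish is that ${\ell}(\al_j)\ge 2$ for every $j\in I_H(\al)$. Indeed such an $\al_j$ lies in $H\pri T\subset G$, so if $\al_j\in{\cal A}\cup{\cal B}$ then $\al_j\in({\cal A}\cup{\cal B})\cap G=A\cup B\subset T$, contradicting $\al_j\notin T$. Hence each $D_j$ with $j\in I_H(\al)$ has length at least $2$, and by Remark~\ref{star}~b) a merge occurring at one end of $D_j$ in the iterated $*$-product cannot propagate to its other end. Consequently, a cascade of merges inside $D_1*\cdots*D_n$ can only occur across a length-$1$ $I_T$-piece flanked by two $I_H$-neighbours, and it produces at most a three-factor entry of the shape $({\cal B}\cap{\cal H})\cdot B\cdot({\cal B}\cap{\cal H})$. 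Every other entry of $\tau$ is either a single factor (in $B$ or in ${\cal B}\cap{\cal H}$ according to its source index) or a two-factor merge at a single boundary. A position-by-position case analysis then yields $\tau_1\in B({\cal B}\cap{\cal H})$ (a left-end cascade is impossible, because $2\in I_H(\al)$ forces ${\ell}(D_2)\ge 2$), $\tau_m\in({\cal B}\cap{\cal H})B$ by symmetry, and $\tau_i\in({\cal B}\cap{\cal H})B({\cal B}\cap{\cal H})$ for $2\le i\le m-1$, where we interpret the identity of the appropriate subgroup as a trivial factor when needed. Multiplying on the left and right by $\eta_{i-1}^{-1},\eta_i\in{\cal A}\cap{\cal B}\subset{\cal B}\cap{\cal H}$ is absorbed into the outer $({\cal B}\cap{\cal H})$-factors of $\tau_i$, giving the three claimed inclusions for $\theta_i$.

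The main obstacle will be the careful bookkeeping of how merges propagate through the iterated $*$-operation. The crucial observation that ${\ell}(D_j)\ge 2$ for $j\in I_H(\al)$ is what forces any cascade to terminate within three factors and keeps the final product sets matching those in the statement.
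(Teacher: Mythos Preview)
Your proposal is correct and follows essentially the same approach as the paper: apply Theorem~\ref{thm} to an $({\cal A},{\cal B})$-minimal reduced $(H,T)$-decomposition of $h$, compare the resulting $\tau=D_1*\cdots*D_n$ with $\theta$ via $\thickapprox_{{\cal A}\cap{\cal B}}$ (Remark~\ref{dequi}), and analyse the entries of $\tau$ using the key fact (Remark~\ref{lgle2}) that $I_H$-blocks have $({\cal A},{\cal B})$-length at least~$2$. The only organisational difference is that the paper proves part~3) by induction on~$n$, whereas you argue directly by bounding the length of a merge-cascade to three factors.
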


\begin{proof} Let $\al=(\al_1,\ldots,\al_n)$ be an $(H,T)$-reduced and $({\cal A},{\cal B})$-minimal decomposition of $h$ and
let $A_i$ be as in Theorem~\ref{thm} for all $i\in\{1,\ldots,n\}$.
By Theorem~\ref{thm}, $D_1*\cdots*D_n\thickapprox_{{\cal A}\cap{\cal B}}\theta$ (see Remark~\ref{dequi}).
We denote $\theta'=(\theta'_1,\ldots,\theta'_m)=D_1*\cdots*D_n$.\\
1) We assume $i=1$. If $\theta'_1={\bf h}(D_1)$ then $\theta'_1\in B\cup({\cal B}\cap{\cal H})$.
If $\theta'_1\not={\bf h}(D_1)$ then $D_1=(\al_1)$ with $\al_1\in B$, ${\bf h}(A_2)\in{\cal B}\cap{\cal H}$
and $\theta'_1=\al_1{\bf h}(D_2)\in B({\cal B}\cap{\cal H})$. In both cases, $\theta'_1\in B({\cal B}\cap{\cal H})$
and then $\theta_1\in\theta_1'({\cal A}\cap{\cal B})\subset B({\cal B}\cap{\cal H})({\cal A}\cap{\cal B})=B({\cal B}\cap{\cal H})$
(using compatibility).\\
2) Using 1), we obtain 2) by changing $h$ to $h^{-1}$.\\
3) We assume $i\in \{2,\ldots,m-1\}$.
We prove $\theta_i'\in ({\cal B}\cap{\cal H})B$ by induction on $n$.
Since $\theta_i\in({\cal A}\cap{\cal B})\theta_i'({\cal A}\cap{\cal B})$ the result 3) follows.
If $n=1$ then $\theta'=D_1$ and $\theta'_i\in B\cup({\cal B}\cap{\cal H})$.
Let's assume the result for $n-1$. We set $D=D_1*\cdots*D_{n-1}$.
If $D*D_n=DD_n$ then
the result follows from induction hypothesis and $n=1$ case. We assume $D*D_n\not=DD_n$.
If $i<\ell(D)$ then $\theta_i\in ({\cal B}\cap{\cal H})B({\cal B}\cap{\cal H})$ by induction hypothesis
and if $i>\ell(D)$ then $\theta_i\in ({\cal B}\cap{\cal H})B({\cal B}\cap{\cal H})$ by the case $n=1$.
In the case $i=\ell(D)$, we have $\theta_i={\bf t}(D){\bf h}(D_n)$ and ${\bf t}(D)\in ({\cal B}\cap{\cal H})B$ by 2)
and ${\bf h}(D_n)\in B({\cal B}\cap{\cal H})$ by~1). We deduce $\theta_i\in({\cal B}\cap{\cal H})B({\cal B}\cap{\cal H})$.
\end{proof}

\begin{remark}\label{sym}
We can exchange the roles of ${\cal A}$ and ${\cal B}$ in Corollary~\ref{criterion}.
\end{remark}

The converse of Corollary~\ref{criterion} is not true in general. Nevertheless we have the following partial result
for elements of length $3$.

\begin{corollary}\label{criterionlg3}
We assume that $A\subset {\cal H}$. Let $h$ be an element of $G$ such that $\ell(h)=3$ and $h\in{\cal B}{\cal A}{\cal B}$.
Then $h\in\langle H,T\rangle$ if and only if $h\in B{\cal H}B$.
\end{corollary}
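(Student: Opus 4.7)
The plan is to prove the two implications of the biconditional separately. The reverse direction is essentially automatic from the setup, while the forward direction is the substantive one and will be a direct application of Corollary~\ref{criterion} (once in each of its three parts), combined with the extra hypothesis $A\subset\mathcal{H}$.

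For the easy direction $(\Leftarrow)$, suppose $h=b_1\eta b_2$ with $b_1,b_2\in B$ and $\eta\in\mathcal{H}$. Since $b_1,b_2,h$ all lie in $G$, so does $\eta=b_1^{-1}hb_2^{-1}$, and hence $\eta\in\mathcal{H}\cap G=H$. Therefore $h\in B\cdot H\cdot B\subset \langle H,T\rangle$, as desired.

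For the forward direction $(\Rightarrow)$, assume $h\in\langle H,T\rangle$. Since $\ell(h)=3$ and $h\in\mathcal{B}\mathcal{A}\mathcal{B}$, there is a reduced $(\mathcal{A},\mathcal{B})$-decomposition $\theta=(\theta_1,\theta_2,\theta_3)$ of $h$ with $\theta_1,\theta_3\in\mathcal{B}$ and $\theta_2\in\mathcal{A}$. I apply Corollary~\ref{criterion} three times. Part~1) applied to $\theta_1$ gives $\theta_1\in B(\mathcal{B}\cap\mathcal{H})$. Part~2) applied to $\theta_3$ gives $\theta_3\in(\mathcal{B}\cap\mathcal{H})B$. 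For the middle factor $\theta_2\in\mathcal{A}$, I use Remark~\ref{sym}, which lets me swap the roles of $\mathcal{A}$ and $\mathcal{B}$; part~3) in this swapped form then yields $\theta_2\in(\mathcal{A}\cap\mathcal{H})\,A\,(\mathcal{A}\cap\mathcal{H})$.

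The key observation is now that, because we are assuming $A\subset\mathcal{H}$, the containment $(\mathcal{A}\cap\mathcal{H})A(\mathcal{A}\cap\mathcal{H})\subset\mathcal{H}$ holds, so in fact $\theta_2\in\mathcal{H}$. Writing $\theta_1=b_1\eta_1$ and $\theta_3=\eta_3b_2$ with $b_1,b_2\in B$ and $\eta_1,\eta_3\in\mathcal{B}\cap\mathcal{H}\subset\mathcal{H}$, we obtain
$$h=\theta_1\theta_2\theta_3=b_1(\eta_1\theta_2\eta_3)b_2\in B\mathcal{H}B,$$
since $\eta_1\theta_2\eta_3\in\mathcal{H}$. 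This completes the proof. The only non-routine part is remembering to use the symmetric version of Corollary~\ref{criterion} for the middle factor $\theta_2$ — that is where the hypothesis $A\subset\mathcal{H}$ enters, and without it one would only get $\theta_2$ in the potentially larger set $(\mathcal{A}\cap\mathcal{H})\,A\,(\mathcal{A}\cap\mathcal{H})$, which need not lie in $\mathcal{H}$.
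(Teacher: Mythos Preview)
Your proof is correct and follows essentially the same approach as the paper's own proof: both directions are handled identically, using Corollary~\ref{criterion} (together with Remark~\ref{sym} for the middle factor) in the forward direction and the observation $\eta=b_1^{-1}hb_2^{-1}\in G$ in the reverse direction. Your exposition is slightly more explicit in writing out $h=b_1(\eta_1\theta_2\eta_3)b_2$, but the argument is the same.
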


\begin{proof} Let $\theta=(\theta_1,\theta_2,\theta_3)$ (with $\theta_1,\theta_3\in{\cal B}$ and $\theta_2\in{\cal A}$)
be a reduced $({\cal A},{\cal B})$-decomposition of $h$. If $h\in\langle H,T\rangle$ then, by Corollary~\ref{criterion} (see also
Remark~\ref{sym}), we have $\theta_1\in B({\cal B}\cap{\cal H})$, $\theta_3\in ({\cal B}\cap{\cal H})B$ and
$\theta_2\in ({\cal A}\cap{\cal H})A({\cal A}\cap{\cal H})$. Using $A\subset {\cal H}$,
we deduce $h=\theta_1\theta_2\theta_3\in B{\cal H}B$. Conversely if $h\in B{\cal H}B$,
there exist $b_1,b_2\in B$ and $\rho\in{\cal H}$ such that $h=b_1\rho b_2$. Hence $\rho=b_1^{-1}hb_2^{-1}\in G$ and we deduce
$\rho\in H$ and $h\in\langle H,T\rangle$.
\end{proof}

The following corollary is useful to study the normality of the subgroup $\langle H,T\rangle$.

\begin{corollary}\label{normal}
Let $t$ (resp.\ $g$) be an element of $\langle H,T\rangle$ (resp.\ $G$) and let $\theta$ (resp.\ $\gamma$) be a reduced
$({\cal A},{\cal B})$-decomposition of $t$ (resp.\ $g$). We assume ${\bf h}(\theta)\in{\cal A}$, ${\bf t}(\theta)\in{\cal A}$,
${\bf h}(\gamma)\in{\cal B}\pri B({\cal B}\cap{\cal H})$ and ${\bf t}(\gamma)\in{\cal B}$. Then $gtg^{-1}\in G\pri\langle H,T\rangle$.
\end{corollary}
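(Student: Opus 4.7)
The plan is a short argument by contradiction reducing to Corollary~\ref{criterion}(1). Assuming $gtg^{-1}\in\langle H,T\rangle$, I would build an explicit reduced $({\cal A},{\cal B})$-decomposition of $gtg^{-1}$ whose first letter is $\gamma_1={\bf h}(\gamma)\in{\cal B}$, and then invoke part~1 of Corollary~\ref{criterion} to force $\gamma_1\in B({\cal B}\cap{\cal H})$, contradicting the hypothesis ${\bf h}(\gamma)\in{\cal B}\pri B({\cal B}\cap{\cal H})$.

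Setting $\gamma=(\gamma_1,\ldots,\gamma_k)$ and $\theta=(\theta_1,\ldots,\theta_m)$, the reversed sequence $\gamma^{-1}:=(\gamma_k^{-1},\ldots,\gamma_1^{-1})$ is a reduced $({\cal A},{\cal B})$-decomposition of $g^{-1}$, since reducedness is preserved under $\gamma_{i+1}^{-1}\gamma_i^{-1}=(\gamma_i\gamma_{i+1})^{-1}$. I would then show that the concatenation
$$\omega:=(\gamma_1,\ldots,\gamma_k,\theta_1,\ldots,\theta_m,\gamma_k^{-1},\ldots,\gamma_1^{-1})$$
is itself a reduced $({\cal A},{\cal B})$-word. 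Inside each of the three blocks this is automatic, so the work lies at the two junctions $\gamma_k\theta_1$ and $\theta_m\gamma_k^{-1}$. Since $\gamma$ is reduced, $\gamma_k={\bf t}(\gamma)\in{\cal B}\pri{\cal A}$; since $\theta$ is reduced and ${\bf h}(\theta),{\bf t}(\theta)\in{\cal A}$, we have $\theta_1,\theta_m\in{\cal A}\pri{\cal B}$. A product of an element of ${\cal B}\pri{\cal A}$ with one of ${\cal A}\pri{\cal B}$ cannot belong to ${\cal A}\cup{\cal B}$ (otherwise one of the factors would lie in ${\cal A}\cap{\cal B}$), which handles both junctions and shows that $\omega$ is reduced.

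Once $\omega$ is known to be a reduced $({\cal A},{\cal B})$-decomposition of $gtg^{-1}$, the conclusion is immediate: $gtg^{-1}\in G$ trivially, because $g\in G$ and $t\in\langle H,T\rangle\subset G$; and if moreover $gtg^{-1}\in\langle H,T\rangle$, then applying Corollary~\ref{criterion}(1) to $\omega$ (whose first letter $\omega_1=\gamma_1$ lies in ${\cal B}$) gives $\gamma_1\in B({\cal B}\cap{\cal H})$, contradicting the standing hypothesis on ${\bf h}(\gamma)$. Hence $gtg^{-1}\in G\pri\langle H,T\rangle$. The only delicate point, and therefore the main thing to verify carefully, is the reducedness at the two junctions; everything else is a direct invocation of the preceding material.
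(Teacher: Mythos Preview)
Your proposal is correct and follows essentially the same approach as the paper: form the concatenation $\gamma\theta\gamma^{-1}$, verify it is a reduced $({\cal A},{\cal B})$-decomposition of $gtg^{-1}$ (the paper does this via Remark~\ref{star} and the $*$-operation, you do it by directly checking the two junctions), and then apply Corollary~\ref{criterion}(1) to the first letter $\gamma_1$ to derive a contradiction with ${\bf h}(\gamma)\notin B({\cal B}\cap{\cal H})$. Your junction argument---that a product of an element of ${\cal B}\pri{\cal A}$ with one of ${\cal A}\pri{\cal B}$ cannot lie in ${\cal A}\cup{\cal B}$---is exactly the content of the case $\delta\notin{\cal A}\cup{\cal B}$ in the definition of~$*$.
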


\begin{proof} We write $\gamma=(\gamma_1,\ldots_,\gamma_l)$ and we set $\gamma^{-1}=(\gamma_l^{-1},\ldots_,\gamma_1^{-1})$.
By Remark~\ref{star}, the assumptions ${\bf h}(\theta)\in{\cal A}$, ${\bf t}(\theta)\in{\cal A}$ and ${\bf t}(\gamma)\in{\cal B}$ and  imply that $\gamma*\theta*\gamma^{-1}=\gamma\theta\gamma^{-1}$ is an
$({\cal A},{\cal B})$-decomposition of $gtg^{-1}$. Since ${\bf h}(\gamma\theta\gamma^{-1})={\bf h}(\gamma)\in{\cal B}\pri B({\cal B}\cap{\cal H})$,
Corollary~\ref{criterion} gives $gtg^{-1}\in G\pri\langle H,T\rangle$.
\end{proof}

\section{Affine type subgroup}

Let $K$ be a field of characteristic $p\ge 0$.
We denote by  ${\cal G}={\rm Aut}_KK[x,y]$ the automorphism group of the $K$-algebra $K[x,y]$.
We denote by ${\cal A}={\rm Aff}_2(K)$ (resp.\ ${\cal B}={\rm BA}_2(K)$) the affine (resp.\ triangular) subgroup of ${\cal G}$.
We recall that ${\cal G}={\cal A}*_{\cap}{\cal B}$ (by the Jung-van der Kulk theorem).\\

{\bf A. $p$-stable subset}\\

For every subset $I$ of $\N_{\ge 2}$ ($\N_{\ge 2}$ is the set of integers greater or equal to $2$), we consider
the following subgroup of ${\cal B}$:
$${\cal B}^I:=\{(x+P(y),y)\,;\,P(y)\in \bigoplus_{i\in I} Ky^i\}\subset{\cal B}.$$

\begin{proposition}\label{pstable}
Let $I$ be a subset of $\N_{\ge 2}$. 
The following five conditions are equivalent.\\
$$\hspace{-2.6cm}i)\hspace{.3cm}({\cal A}\cap{\cal B}){\cal B}^I={\cal B}^I({\cal A}\cap{\cal B}).
\hspace{1cm}i')\hspace{.3cm}({\cal A}\cap{\cal B}){\cal B}^I\subset{\cal B}^I({\cal A}\cap{\cal B}).$$
$$\hspace{-3.6cm}ii)\hspace{.3cm}(\forall\,n\in I)(\forall\,a,b\in K)\,\,(ay+b)^n\in\bigoplus_{i\in I} Ky^i\oplus Ky\oplus K.$$
$$\hspace{-5.4cm}iii)\hspace{.3cm}(\forall\,n\in I)\,\,(y+1)^n\in\bigoplus_{i\in I} Ky^i\oplus Ky\oplus K.$$
$$\hspace{-2.5cm}iv)\hspace{.3cm}(\forall\,n\in I)(\forall\,k\in\N_{\ge 2})\,\,k\le n\Rightarrow\left[{n\choose k}=0{\rm\ mod\ }p{\rm\ or\ } k\in I\right].$$
\end{proposition}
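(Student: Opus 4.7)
I would establish the chain of implications $i)\Rightarrow i')\Rightarrow ii)\Rightarrow iii)\Rightarrow iv)\Rightarrow ii)\Rightarrow i')\Rightarrow i)$ by combining one small group-theoretic remark, one direct computation of a composition in ${\cal G}$, and a binomial expansion.

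The starting observation is that ${\cal B}^I$ is a (commutative) subgroup of ${\cal B}$: the composition of $(x+P(y),y)$ and $(x+Q(y),y)$ equals $(x+P(y)+Q(y),y)$, and $\bigoplus_{i\in I}Ky^i$ is stable under addition and negation. Trivially $i)\Rightarrow i')$, and $i')\Rightarrow i)$ by inversion: given $\tau\sigma\in {\cal B}^I({\cal A}\cap{\cal B})$, apply $i')$ to $\sigma^{-1}\tau^{-1}\in ({\cal A}\cap{\cal B}){\cal B}^I$ to rewrite it as $\tau'\sigma'$ with $\tau'\in{\cal B}^I$, $\sigma'\in{\cal A}\cap{\cal B}$; inverting gives $\tau\sigma=(\sigma')^{-1}(\tau')^{-1}\in ({\cal A}\cap{\cal B}){\cal B}^I$.

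Next I would link $i')$ to $ii)$ by an explicit composition. For $\sigma\in{\cal A}\cap{\cal B}$ with $\sigma(x)=\alpha x+\beta y+b$, $\sigma(y)=\gamma y+d$ (so $\alpha\gamma\in K^\times$), and $\tau=(x+P(y),y)\in{\cal B}^I$, one computes $(\sigma\tau)(x)=\alpha x+\beta y+b+P(\gamma y+d)$ and $(\sigma\tau)(y)=\gamma y+d$. Writing $\sigma\tau=\tau'\sigma'$ with $\tau'(x)=x+P'(y)$ and $\sigma'\in{\cal A}\cap{\cal B}$ forces $\sigma'(y)=\gamma y+d$ and $\alpha P'(y)=P(\gamma y+d)-c_1y-c_0$ for the constant and linear-in-$y$ parts $c_1y+c_0$. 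Hence $i')$ is equivalent to: for every $P\in\bigoplus_{i\in I}Ky^i$ and every $\gamma\in K^\times$, $d\in K$,
$$P(\gamma y+d)\in\bigoplus_{i\in I}Ky^i\oplus Ky\oplus K.$$
Specializing to $P(y)=y^n$ with $n\in I$ (and noting that the $a=0$ case of $ii)$ is vacuous since $b^n\in K$) gives exactly $ii)$, and conversely $ii)$ suffices to handle arbitrary $P$ by linearity.

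The remaining implications $ii)\Rightarrow iii)\Rightarrow iv)\Rightarrow ii)$ are purely arithmetic. The specialization $a=b=1$ yields $iii)$. For $iii)\Rightarrow iv)$, expand $(y+1)^n=\sum_{k=0}^n\binom{n}{k}y^k$: this lies in $\bigoplus_{i\in I}Ky^i\oplus Ky\oplus K$ exactly when every $k\in\{2,\ldots,n\}$ with $\binom{n}{k}\not\equiv 0\pmod p$ belongs to $I$. Conversely, the expansion $(ay+b)^n=\sum_{k=0}^n\binom{n}{k}a^kb^{n-k}y^k$ combined with $iv)$ shows that for $2\le k\le n$ the coefficient of $y^k$ vanishes unless $k\in I$, which gives $ii)$. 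The only step that requires a bit of care is the computation linking $i')$ and $ii)$: one must observe that the $y^0$- and $y^1$-components of $P(\gamma y+d)$ can always be absorbed into the affine factor $\sigma'$, and this is precisely what the direct sum $\bigoplus_{i\in I}Ky^i\oplus Ky\oplus K$ records. Once this bookkeeping is made explicit, the rest is routine.
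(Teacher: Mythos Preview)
Your proof is correct and follows essentially the same approach as the paper's: the equivalence $i)\Leftrightarrow i')$ via the subgroup/inversion observation, the explicit computation of $\sigma\tau$ linking $i')$ and $ii)$, and the binomial expansion handling $ii)$, $iii)$, $iv)$. The only organizational difference is that the paper proves $i')\Rightarrow iii)$ directly with the specific choice $t=(x,y+1)$, $\beta=(x+y^n,y)$ and then recovers $ii)$ from $iii)$ by the substitution $y\mapsto \frac{a}{b}y$, whereas you extract $ii)$ straight from the general composition formula; both routes are equally short.
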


\begin{proof} The equivalence $i)\,\Leftrightarrow\,i')$ holds since both 
${\cal A}\cap{\cal B}$ and ${\cal B}^I$ are subgroups
of ${\cal B}$.
The implication $ii)\,\Rightarrow\,iii)$ is obvious. Conversely, we assume $iii)$ and we consider $a,b\in K$ and $n\in I$.
If $b=0$ then $(ay+b)^n=a^ny^n\in Ky^n$. If $b\ne 0$ then, changing $y$ to ${a\over b}y$ in $iii)$, we have
$(ay+b)^n=b^n({a\over b}y+1)^n\in\bigoplus_{i\in I} Ky^i\oplus Ky\oplus K$. The equivalence $iii)\Leftrightarrow iv)$
is a consequence of the binomial theorem. \\
We assume $i')$ and we prove $iii)$. Let $n$ be in $I$. We have $\beta:=(x+y^n,y)\in{\cal B}^I$ and $t:=(x,y+1)\in{\cal A}\cap{\cal B}$.
By $i')$, we have
$$t\beta=(x,y+1)(x+y^n,y)=(x+(y+1)^n,y+1)\in({\cal A}\cap{\cal B}){\cal B}^I\subset{\cal B}^I({\cal A}\cap{\cal B}).$$
We deduce that there exist $P(y)\in\bigoplus_{i\in I} Ky^i$, $a,d\in K^*$, $b,c,e\in K$ such that
$$(x+(y+1)^n,y+1)=(x+P(y),y)(ax+by+c,dy+e)=(ax+aP(y)+by+c,dy+e)$$
and this implies $(y+1)^n\in\bigoplus_{i\in I} Ky^i\oplus Ky\oplus K$.\\
Conversely, we assume $ii)$ and we prove $i')$. We consider $\al\in{\cal A}\cap{\cal B}$ and $\beta\in{\cal B}^I$, we have:
$$\al\beta=(ax+by+c,dy+e)(x+P(y),y)=(ax+aP(dy+e)+by+c,dy+e)$$
where $a,d\in K^*$, $b,c,e\in K$ and $P(y)\in\bigoplus_{i\in I} Ky^i$.
By $ii)$, we have $aP(dy+e)=Q(y)+fy+g$ with $Q(y)\in\bigoplus_{i\in I} Ky^i$ and $f,g\in K$.
Hence, $\beta_1^{-1}\al\beta\in{\cal A}\cap{\cal B}$ where $\beta_1=(x+Q(y),y)\in{\cal B}^I$
and $\al\beta=\beta_1(\beta_1^{-1}\al\beta)\in{\cal B}^I({\cal A}\cap{\cal B})$.
\end{proof}

We say that $I\subset\N_{\ge 2}$ is $p$\textit{-stable} 
if the equivalent conditions in Proposition~\ref{pstable} are satisfied. 


\begin{remark}
1) If $(I_m)_{m\in\N}$ is a sequence of $p$-stable subsets of $\N_{\ge 2}$ then
$\bigcup_{m\in\N}I_m$ and $\bigcap_{m\in\N}I_m$ are $p$-stable.\\
2) The subsets $\emptyset$, $\{2,\ldots,k\}$ (where $k\in\N_{\ge 2}$) and $\N_{\ge 2}$ are $p$-stable.
All $0$-stable subsets of $\N_{\ge 2}$ have this form.\\
3) If $p\ne 0$, the subsets $p^n\{1,\ldots,k\}$ and $p^n\N_+$ (where $k,n\in\N_+$) are $p$-stable.\\
4) If $p\ne 0$, the subsets $\{p^n,p^{n+1}\}$ (where $n\in\N_+$) are $p$-stable.\end{remark}
\begin{proof}
1) This is clear using $iii)$ of Proposition~\ref{pstable}.\\
2) This is clear using $iv)$ of Proposition~\ref{pstable}.\\
3) This follows from the formula $(y+1)^{p^nk}=(y^{p^n}+1)^k$ in $K$ for all $k,n\in\N_+$
(using $iii)$ of Proposition~\ref{pstable}).\\
4) This follows from the formula $(y+1)^{p^n+1}=(y^{p^n}+1)(y+1)=y^{p^n+1}+y^{p^n}+y+1$
in $K$ for all $n\in\N_+$ (using $iii)$ of Proposition~\ref{pstable}).
\end{proof}

{\bf B. Affine type subgroup}\\

For every subset $I\subset\N_{\ge 2}$, we set ${\cal A}^I:=\langle {\cal A},{\cal B}^I\rangle$.
We say that a subgroup ${\cal H}$ of ${\cal G}$ is an \textit{affine type} subgroup of ${\cal G}$ if
${\cal H}={\cal A}^I$ for some $p$-stable subset $I\subset\N_{\ge 2}$.

\begin{proposition}\label{affinetype}
Let $I$ be a $p$-stable subset of $\N_{\ge 2}$. Then
${\cal B}\cap{\cal A}^I={\cal B}^I({\cal A}\cap{\cal B})$.
\end{proposition}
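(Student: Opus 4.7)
The plan is to introduce ${\cal L}:={\cal B}^I({\cal A}\cap{\cal B})$ and prove that ${\cal A}^I$ admits the amalgamated product structure ${\cal A}*_{{\cal A}\cap{\cal B}}{\cal L}$; the conclusion will then follow from length considerations inside ${\cal G}={\cal A}*_\cap{\cal B}$. The inclusion ${\cal L}\subset{\cal B}\cap{\cal A}^I$ is immediate. For the reverse, three preliminary facts are needed. First, ${\cal L}$ is a subgroup of ${\cal B}$: the $p$-stability of $I$ (condition~(i) of Proposition~\ref{pstable}) gives $({\cal A}\cap{\cal B}){\cal B}^I={\cal B}^I({\cal A}\cap{\cal B})$, from which closure under multiplication and inversion follow. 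Second, ${\cal A}\cap{\cal L}={\cal A}\cap{\cal B}$: if $\beta\alpha\in{\cal A}$ with $\beta=(x+P(y),y)\in{\cal B}^I$ and $\alpha\in{\cal A}\cap{\cal B}$, then $\beta\in{\cal A}$, forcing $P=0$ since $I\subset\N_{\ge 2}$. Third, $\langle{\cal A},{\cal L}\rangle={\cal A}^I$, trivially from ${\cal B}^I\subset{\cal L}\subset{\cal A}^I$.

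The central step is showing ${\cal A}^I={\cal A}*_{{\cal A}\cap{\cal B}}{\cal L}$. I would prove this by arguing that any reduced $({\cal A},{\cal L})$-word is also a reduced $({\cal A},{\cal B})$-word, so that the Jung-van der Kulk amalgamation ${\cal G}={\cal A}*_\cap{\cal B}$ forces non-trivial image. The identity ${\cal A}\cap{\cal L}={\cal A}\cap{\cal B}$ combined with ${\cal L}\subset{\cal B}$ yields ${\cal A}\pri{\cal L}={\cal A}\pri{\cal B}$ and ${\cal L}\pri{\cal A}\subset{\cal B}\pri{\cal A}$, so each letter of such a word lies in $({\cal A}\pri{\cal B})\cup({\cal B}\pri{\cal A})$, with consecutive letters on opposite sides. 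The product of two such consecutive letters cannot lie in ${\cal A}\cup{\cal B}$ by the standard argument (else one factor would belong to both ${\cal A}$ and ${\cal B}$, a contradiction).

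Finally, for $h\in{\cal B}\cap{\cal A}^I$: if $h\in{\cal A}\cap{\cal B}$ then $h=1\cdot h\in{\cal L}$. Otherwise, $h$ admits a reduced $({\cal A},{\cal L})$-decomposition $\alpha$ of length $\ge 1$, which is also $({\cal A},{\cal B})$-reduced by the previous step. Since $h\in{\cal B}\pri{\cal A}$, the word $(h)$ is an $({\cal A},{\cal B})$-reduced decomposition of $h$ of length~$1$, so by uniqueness of length in ${\cal G}={\cal A}*_\cap{\cal B}$ the length of $\alpha$ is~$1$ as well. Its unique letter cannot lie in ${\cal A}\pri{\cal L}={\cal A}\pri{\cal B}$ (since $h\in{\cal B}$), so it lies in ${\cal L}\pri{\cal A}\subset{\cal L}$, and $h\in{\cal L}$. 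The main obstacle in this proof is the closure of ${\cal L}$ under multiplication, which is precisely the role of the $p$-stability hypothesis; once ${\cal L}$ is known to be a group, the amalgamation transfer and the length argument are essentially formal.
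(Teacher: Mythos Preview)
Your proof is correct and follows essentially the same route as the paper: both arguments take a decomposition of an element of ${\cal A}^I$ in terms of ${\cal A}$ and the triangular-type generators, show it becomes a reduced $({\cal A},{\cal B})$-word, and then invoke ${\cal G}={\cal A}*_\cap{\cal B}$ to force length~$1$. The only difference is packaging: you work with ${\cal L}={\cal B}^I({\cal A}\cap{\cal B})$ and state the intermediate amalgamation ${\cal A}^I={\cal A}*_{{\cal A}\cap{\cal B}}{\cal L}$ explicitly, whereas the paper works directly with a minimal $({\cal A},{\cal B}^I)$-decomposition; in both cases $p$-stability enters precisely to give ${\cal B}^I({\cal A}\cap{\cal B}){\cal B}^I={\cal B}^I({\cal A}\cap{\cal B})$.
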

\begin{proof}
The inclusion ${\cal B}\cap{\cal A}^I\supset{\cal B}^I({\cal A}\cap{\cal B})$ is obvious. Conversely, we consider $\beta\in{\cal B}\cap{\cal A}^I$. Let $(\theta_1,\ldots,\theta_m)$ be an $({\cal A},{\cal B}^I)$-decomposition
of $\beta$ of minimal length. For all $i\in\{1,\ldots m\}$ such that
$\theta_i\in{\cal B}^I$, we have $\theta_i\not={\rm id}$ and thus $\theta_i\in{\cal B}\pri{\cal A}$ (since ${\cal A}\cap{\cal B}^I=\{{\rm id}\}$).
For all $i\in\{2,\ldots m-1\}$ such that $\theta_i\in{\cal A}$,
we have $\theta_i\not\in{\cal A}\cap{\cal B}$ and thus $\theta_i\in{\cal A}\pri{\cal B}$
(since ${\cal B}^I({\cal A}\cap{\cal B}){\cal B}^I={\cal B}^I({\cal A}\cap{\cal B})$ by the property $i)$ of $p$-stability).
If $\theta_1\in{\cal A}\cap{\cal B}$ (resp.\ $\theta_m\in{\cal A}\cap{\cal B}$), we change $\beta$ to
$\theta_1^{-1}\beta$ (resp.\ $\beta\theta_m^{-1}$) and $(\theta_1,\ldots,\theta_m)$ to
$(\theta_2,\ldots,\theta_m)$ (resp.\ $(\theta_1,\ldots,\theta_{m-1})$). Now, $(\theta_1,\ldots,\theta_m)$
is a reduced $({\cal A},{\cal B})$-decomposition equivalent to $(\beta)$. Using ${\cal G}={\cal A}*_{\cap}{\cal B}$,
we deduce $m=1$, $\theta_1\in{\cal B}^I$ and
$\beta\in({\cal A}\cap{\cal B}){\cal B}^I({\cal A}\cap{\cal B})={\cal B}^I({\cal A}\cap{\cal B})$.
\end{proof}

\begin{corollary}\label{incressing}
Let $I$ and $J$ be two $p$-stable subsets of $\N_{\ge 2}$. Then $I\subset J$ if and only if ${\cal A}^I\subset{\cal A}^{J}$.
\end{corollary}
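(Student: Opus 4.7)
The forward implication is immediate: if $I\subset J$ then ${\cal B}^I\subset{\cal B}^J$, so ${\cal A}^I=\langle{\cal A},{\cal B}^I\rangle\subset\langle{\cal A},{\cal B}^J\rangle={\cal A}^J$. The substantive direction is the converse, and the plan is to reduce it to a coefficient comparison via Proposition~\ref{affinetype}.

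Assume ${\cal A}^I\subset{\cal A}^J$ and pick an arbitrary $n\in I$. The first step is to exhibit an explicit witness, namely $\beta:=(x+y^n,y)\in{\cal B}^I$. Since ${\cal B}^I\subset{\cal A}^I\subset{\cal A}^J$ and $\beta\in{\cal B}$, we have $\beta\in{\cal B}\cap{\cal A}^J$. Applying Proposition~\ref{affinetype} to $J$ yields ${\cal B}\cap{\cal A}^J={\cal B}^J({\cal A}\cap{\cal B})$, so there exist $Q(y)\in\bigoplus_{j\in J}Ky^j$ and $\al=(ax+by+c,dy+e)\in{\cal A}\cap{\cal B}$ (with $a,d\in K^*$) such that $\beta=(x+Q(y),y)\circ\al$.

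The second step is the coefficient comparison. Expanding the right-hand side gives
$$(x+y^n,y)=(ax+by+c+Q(dy+e),dy+e).$$
The second component forces $d=1$ and $e=0$. The first component then forces $a=1$ and $Q(y)=y^n-by-c$. Since $Q(y)\in\bigoplus_{j\in J}Ky^j$ and $J\subset\N_{\ge 2}$, the submodule $\bigoplus_{j\in J}Ky^j$ contains neither a constant term nor a $y$-term, so $b=c=0$ and $Q(y)=y^n$. This forces $n\in J$, completing the argument.

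There is no real obstacle here; the only thing to be careful about is that the factorization $\beta=\beta'\al$ supplied by Proposition~\ref{affinetype} need not be unique, but we do not need uniqueness—any such factorization, after comparing components, pins down $Q(y)=y^n$ and hence $n\in J$.
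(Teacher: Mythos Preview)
Your proof is correct and follows essentially the same route as the paper's: apply Proposition~\ref{affinetype} to place the witness $(x+y^n,y)$ inside ${\cal B}^J({\cal A}\cap{\cal B})$ and read off $n\in J$ by comparing coefficients. One minor slip: under the paper's convention (tuples record images of $x,y$ under a ring automorphism, and juxtaposition is composition of ring maps), the expansion of $(x+Q(y),y)\circ\al$ is $(ax+aQ(y)+by+c,\,dy+e)$ rather than $(ax+by+c+Q(dy+e),\,dy+e)$; however, the second component forces $d=1$, $e=0$ in either reading, after which the two formulas coincide and your conclusion $Q(y)=y^n$ stands.
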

\begin{proof}
If $I\subset J$ then ${\cal B}^I\subset{\cal B}^J$ and ${\cal A}^I\subset{\cal A}^{J}$. Conversely,
we assume ${\cal A}^I\subset {\cal A}^{J}$.
By Proposition~\ref{affinetype}, this implies ${\cal B}^I({\cal A}\cap{\cal B})\subset {\cal B}^J({\cal A}\cap{\cal B})$.
Let $n$ be an element in $I$. We have $(x+y^n,y)\in {\cal B}^I\subset{\cal B}^J({\cal A}\cap{\cal B})$.
There exist $P(y)\in\bigoplus_{i\in J} Ky^i$, $a,d\in K^*$, $b,c,e\in K$ such that:\\
$(x+y^n,y)=(x+P(y),y)(ax+by+c,dy+e)=(ax+aP(y)+by+c,dy+e).$\\
Since $n\ge 2$, we deduce $n\in J$.
\end{proof}

\begin{corollary}\label{compatible}
Let ${\cal H}$ be an affine type subgroup of ${\cal G}$. Then
${\cal H}=\langle {\cal A}, {\cal B}\cap{\cal H}\rangle$.\\
In other words, ${\cal A}\subset{\cal H}$ and
${\cal H}$ is $({\cal A},{\cal B})$-compatible.
\end{corollary}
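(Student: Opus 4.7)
The plan is to unwind the definitions and apply Proposition~\ref{affinetype} directly. Since ${\cal H}$ is affine type, by definition ${\cal H}={\cal A}^I=\langle {\cal A},{\cal B}^I\rangle$ for some $p$-stable $I\subset \N_{\geq 2}$. In particular ${\cal A}\subset{\cal H}$, which gives both ${\cal A}\cap{\cal H}={\cal A}$ (handling one of the two compatibility conditions immediately) and ${\cal A}\cap{\cal B}\subset{\cal A}\subset{\cal H}$ (handling the other compatibility condition (i)). So only the equality ${\cal H}=\langle {\cal A},{\cal B}\cap{\cal H}\rangle$ requires any real argument.

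For the inclusion $\langle {\cal A},{\cal B}\cap{\cal H}\rangle \subset {\cal H}$ there is nothing to do: both ${\cal A}$ and ${\cal B}\cap{\cal H}$ are subgroups of ${\cal H}$. For the reverse inclusion, I would invoke Proposition~\ref{affinetype}, which tells us that ${\cal B}\cap{\cal H}={\cal B}\cap{\cal A}^I={\cal B}^I({\cal A}\cap{\cal B})$. In particular ${\cal B}^I\subset {\cal B}\cap{\cal H}$, so
$$
{\cal H}=\langle {\cal A},{\cal B}^I\rangle \subset \langle {\cal A},{\cal B}\cap{\cal H}\rangle,
$$
giving the desired equality.

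This also delivers the second half of the statement verbatim: from ${\cal A}\cap{\cal H}={\cal A}$, the equality just established reads ${\cal H}=\langle {\cal A}\cap{\cal H},{\cal B}\cap{\cal H}\rangle$, which is condition (ii) of $({\cal A},{\cal B})$-compatibility from Section~3.B, while condition (i) was already noted above. There is no real obstacle here; the only nontrivial input is Proposition~\ref{affinetype}, and the whole argument is a short bookkeeping exercise built on it.
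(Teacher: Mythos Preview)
Your proof is correct and follows essentially the same approach as the paper: both arguments invoke Proposition~\ref{affinetype} to identify ${\cal B}\cap{\cal H}={\cal B}^I({\cal A}\cap{\cal B})$ and then observe that $\langle {\cal A},{\cal B}^I({\cal A}\cap{\cal B})\rangle=\langle {\cal A},{\cal B}^I\rangle={\cal A}^I={\cal H}$. Your write-up simply spells out the trivial inclusion and the compatibility conditions more explicitly than the paper's one-line chain of equalities.
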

\begin{proof}
Let $I$ be a $p$-stable subset of $\N_{\ge 2}$ such that ${\cal H}={\cal A}^I$.
By Proposition~\ref{affinetype},
$\langle{\cal A},{\cal B}\cap{\cal H}\rangle=\langle{\cal A},{\cal B}\cap{\cal A}^I\rangle=\langle{\cal A},{\cal B}^I({\cal A}\cap{\cal B})\rangle
=\langle {\cal A},{\cal B}^I\rangle={\cal A}^I={\cal H}$.
\end{proof}

\begin{proposition}
If a subgroup ${\cal H}$ of ${\cal G}$ is  such that ${\cal H}=\langle {\cal A},{\cal B}\cap{\cal H}\rangle$
and if there exists a $p$-stable subset $I\subset \N_{\ge 2}$ such that ${\cal B}\cap{\cal H}={\cal B}^I({\cal A}\cap{\cal B})$
then ${\cal H}={\cal A}^{I}$. In particular we have the following correspondence:
$$
\begin{array}{|c|c|c|c|c|c|}
\hline
  p{\rm -stable\ subset\ of\ }\N_{\ge 2} & I & \emptyset & \{p^j\,;\,j\in\N_+\} & p\N_+ & \N_{\ge 2} \\
\hline
{\rm\ affine\ type\ subgroup}   & {\cal A}^I & {\rm Aff}_2(K) & {\rm Aff}^{\rm g}_2(K) & {\rm Aff}^{\rm d}_2(K) & {\rm Aut}_KK[x,y]\\
  \hline
\end{array}
$$
\end{proposition}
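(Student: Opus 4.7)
The first sentence reduces to an essentially one-line manipulation of generating sets. The plan is to substitute the hypothesis $ {\cal B}\cap{\cal H}={\cal B}^I({\cal A}\cap{\cal B}) $ into $ {\cal H}=\langle {\cal A},{\cal B}\cap{\cal H}\rangle $ and observe that, since $ {\cal A}\cap{\cal B}\subset{\cal A} $, the factor $ ({\cal A}\cap{\cal B}) $ is redundant inside the generating set. This yields
\[
{\cal H}=\langle{\cal A},{\cal B}^I({\cal A}\cap{\cal B})\rangle=\langle{\cal A},{\cal B}^I,{\cal A}\cap{\cal B}\rangle=\langle{\cal A},{\cal B}^I\rangle={\cal A}^I,
\]
which is the claim.

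For the correspondence table, my strategy is to verify, row by row, the two hypotheses of the first sentence for the proposed pair $({\cal H},I)$. The first hypothesis $ {\cal H}=\langle{\cal A},{\cal B}\cap{\cal H}\rangle $ is trivial when $ {\cal H}={\rm Aff}_2(K) $ or $ {\cal H}={\rm Aut}_KK[x,y] $, and is exactly the content of Proposition~\ref{prop:gdaffine}(i),(ii) when $ {\cal H}={\rm Aff}^{\rm g}_2(K) $ or $ {\cal H}={\rm Aff}^{\rm d}_2(K) $. So the real work lies in the second hypothesis.

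To compute $ {\cal B}\cap{\cal H} $ in each case I would write a general triangular automorphism as $ \beta=(ax+P(y),dy+e) $ with $ a,d\in K^{\times} $, $ e\in K $, $ P(y)\in K[y] $, and read off the defining condition. When $ I=\emptyset $ this gives $ P\in K\oplus Ky $, i.e.\ $ {\cal B}\cap{\rm Aff}_2(K)={\cal A}\cap{\cal B}={\cal B}^\emptyset({\cal A}\cap{\cal B}) $. For $ {\rm Aff}^{\rm g}_2(K) $ the characterization at the end of Section~1B forces $ P(y)-b$ to be a $p$-polynomial in $y$, which after subtracting an element of ${\cal A}\cap{\cal B}$ gives exactly $ {\cal B}^{\{p^j:j\in\N_+\}}({\cal A}\cap{\cal B}) $. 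For $ {\rm Aff}^{\rm d}_2(K) $ the condition $ J\beta\in\GL_2(K) $ reduces to $ P'(y)\in K $, forcing all nonconstant, nonlinear terms of $P$ to have exponent divisible by $p$, hence $ P-(by+c)\in\bigoplus_{i\in p\N_+}Ky^i $, giving $ {\cal B}^{p\N_+}({\cal A}\cap{\cal B}) $. The case $ I=\N_{\ge 2} $ is direct: splitting $ P(y)=P_0+P_1y+P_{\ge 2}(y) $ expresses $\beta$ as an element of $ {\cal B}^{\N_{\ge 2}} $ times one of $ {\cal A}\cap{\cal B} $.

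I expect no serious obstacle: the first part is formal, and each row of the table is a short computation — the only mild subtlety is that in computing $ {\cal B}^I({\cal A}\cap{\cal B}) $ one must absorb the lower-order ($Ky\oplus K$) terms produced by substituting $ dy+e $ into a monomial $y^n$ with $n\in I$ into the $({\cal A}\cap{\cal B})$-factor, which is legitimate precisely by the $p$-stability condition (ii) of Proposition~\ref{pstable}. Once this observation is made, each verification is immediate and the correspondence follows from the first sentence applied four times.
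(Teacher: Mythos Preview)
Your proof is correct and follows exactly the paper's approach: the same one-line chain $\mathcal{H}=\langle\mathcal{A},\mathcal{B}\cap\mathcal{H}\rangle=\langle\mathcal{A},\mathcal{B}^I(\mathcal{A}\cap\mathcal{B})\rangle=\langle\mathcal{A},\mathcal{B}^I\rangle=\mathcal{A}^I$ for the first part, and for the table the same appeal to Proposition~\ref{prop:gdaffine} plus direct inspection of $\mathcal{B}\cap\mathcal{H}$ (the paper simply says this is ``clear from the definitions''). One small remark: your aside about needing $p$-stability to absorb the lower-order terms when computing $\mathcal{B}^I(\mathcal{A}\cap\mathcal{B})$ is unnecessary here, since with the product taken in the order $\beta_1\alpha$ one gets $\beta_1\alpha=(ax+aQ(y)+by+c,\,dy+e)$ directly, and no substitution of $dy+e$ into $Q$ occurs.
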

\begin{proof}
We have ${\cal H}=\langle {\cal A},{\cal B}\cap{\cal H}\rangle=\langle {\cal A},{\cal B}^I({\cal A}\cap{\cal B})\rangle
=\langle{\cal A},{\cal B}^I\rangle={\cal A}^{I}$.\\
By Proposition~\ref{prop:gdaffine}, we have ${\cal H}=\langle {\cal A},{\cal B}\cap{\cal H}\rangle$ for
${\cal H}={\rm Aff}^{\rm g}_2(K)$ or ${\cal H}={\rm Aff}^{\rm d}_2(K)$. The property
${\cal B}\cap{\cal H}={\cal B}^I({\cal A}\cap{\cal B})$ for ${\cal H}={\rm Aff}^{\rm g}_2(K)$ and $I=\{p^j\,;\,j\in\N_+\}$
(resp.\ ${\cal H}={\rm Aff}^{\rm d}_2(K)$ and $I=p\N_+$) is clear from the definitions.
\end{proof}
\ \\
{\bf C. Tame type subgroup}\\

We consider, now, a domain $R$ of characteristic $p$ such that the field of fractions of $R$ is $K$.
We denote by $G={\rm Aut}_RR[x,y]$. We consider $G$ as a subgroup of ${\cal G}$ in the natural way.
We consider the tame subgroup $T=\langle A,B\rangle$ where $A={\cal A}\cap G$ and $B={\cal B}\cap G$.
For every subset $I\subset\N_{\ge 2}$, we set $A^I:={\cal A}^I\cap G$.
For each $\gamma \in G $, 
we denote by $\ell (\gamma )$ the $({\cal A},{\cal B})$-length of $\gamma $. 
In this context, 
Remark~\ref{lgle2} can be improved in the following way:

\begin{remark}\label{lgle3}
For all $\gamma\in G\pri T$, we have $\ell(\gamma)\ge 3$. 
\end{remark}

\begin{proof} Let $\gamma\in G$ be such that $\ell(\gamma)\le 2$.
Eventually changing $\gamma$ to $\gamma^{-1}$, we can assume that $\gamma=\al\beta$ with $\al\in{\cal A}$ and $\beta\in{\cal B}$.
Then $\gamma(y)$ is an affine coordinate of $R[x,y]$ and there exists $a\in A$ such that $a(y)=\gamma(y)$. Hence $a^{-1}\gamma(y)=y$
which implies (since $a^{-1}\gamma\in G$) that $a^{-1}\gamma=b\in B$ and $\gamma=ab\in T$.
\end{proof}

Corollary~\ref{compatible} and Corollary~\ref{product} give:

\begin{theorem}\label{thm:productaffine}
Let ${\cal H}$ be an affine type subgroup of ${\cal G}$. We set $H={\cal H}\cap G$. Then
$$\langle H,T\rangle=H*_{\cap}T.$$
\end{theorem}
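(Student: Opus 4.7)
The plan is to reduce this statement to the abstract amalgamated-product result of Section~3 by checking that the hypotheses of Corollary~\ref{product} are fulfilled in the present setting. The ambient picture is $\cg =\Aut _KK[x,y]$ with its Jung--van der Kulk amalgamation $\cg =\ca *_{\cap }\cb $ over $\ca =\Aff _2(K)$ and $\cb =\BA _2(K)$ (recalled at the beginning of Section~4), while $G=\Aut _RR[x,y]$ sits inside $\cg $ with $A=\ca \cap G$, $B=\cb \cap G$, $H=\ch \cap G$ and $T=\langle A,B\rangle $, matching exactly the notation used in Section~3.B.

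First I would verify that every affine type subgroup $\ch $ of $\cg $ is $(\ca ,\cb )$-compatible in the sense of Section~3.B, i.e.\ satisfies (i) $\ca \cap \cb \subset \ch $ and (ii) $\ch =\langle \ca \cap \ch ,\cb \cap \ch \rangle $. Both facts are already packaged in Corollary~\ref{compatible}: it gives $\ca \subset \ch $ (hence (i), since $\ca \cap \cb \subset \ca $) and $\ch =\langle \ca ,\cb \cap \ch \rangle $, which, combined with $\ca =\ca \cap \ch $, is exactly (ii).

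Once compatibility is in hand, I would simply invoke Corollary~\ref{product} applied to the quadruple $(\cg ,\ca ,\cb ,\ch )$ together with the subgroup $G=\Aut _RR[x,y]$. Its conclusion $\langle H,T\rangle =H*_{\cap }T$ is precisely the statement of the theorem. The substantive work has been done in Theorem~\ref{thm} and its corollaries in Section~3; the definition of affine type subgroup in Section~4 was tailored so that the compatibility hypothesis holds automatically, so there is no genuine obstacle in this final step, and the proof reduces to assembling the two corollaries.
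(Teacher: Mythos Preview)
Your proposal is correct and follows exactly the paper's own argument: the paper's proof consists of the single line ``Corollary~\ref{compatible} and Corollary~\ref{product} give'', and you have spelled out precisely how these two corollaries combine to yield the result.
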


Corollary~\ref{compatible} and Corollary~\ref{normal} give:

\begin{corollary}\label{cor:notnormal}
We assume that $R$ is not a field. Let $I$ and $J$ be two $p$-stable subsets of $\N_{\ge 2}$.
If $J\not\subset I$ then $\langle A^I,T\rangle$ is not a normal subgroup of $\langle A^J,T\rangle$.
\end{corollary}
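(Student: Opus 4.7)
The plan is to apply Corollary~\ref{normal} with ${\cal H} := {\cal A}^I$ and $H := A^I$. Corollary~\ref{compatible} tells us that ${\cal A}^I$ is $({\cal A},{\cal B})$-compatible, which is precisely the hypothesis required to apply Corollary~\ref{normal} with this ${\cal H}$. It then remains to exhibit $t \in \langle A^I, T\rangle$ and $g \in \langle A^J, T\rangle \subset G$ whose reduced $({\cal A},{\cal B})$-decompositions meet the endpoint conditions of that corollary, because the conclusion $gtg^{-1} \in G \setminus \langle A^I, T\rangle$ combined with $gtg^{-1} \in \langle A^J, T\rangle$ then yields $gtg^{-1} \in \langle A^J, T\rangle \setminus \langle A^I, T\rangle$, i.e.\ the desired non-normality.

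For $t$ I take the swap $t := (y,x) \in A \subset T \subset \langle A^I, T\rangle$, whose length-one $({\cal A},{\cal B})$-decomposition $(t)$ has head and tail equal to $t \in {\cal A}\setminus{\cal B}$, as required. The substantive work is constructing $g$. Pick $n \in J \setminus I$ (nonempty by $J \not\subset I$) and a nonzero non-unit $r \in R$ (exists since $R$ is not a field), and set $\beta := (x + r^{-1} y^n,\, y) \in {\cal B}^{\{n\}} \subset {\cal B}^J \subset {\cal A}^J$. First I would verify $\beta \in {\cal B} \setminus B \cdot ({\cal B} \cap {\cal A}^I)$: by Proposition~\ref{affinetype} we have ${\cal B}\cap{\cal A}^I = {\cal B}^I({\cal A}\cap{\cal B})$, and any hypothetical factorization $\beta = b\,\beta'\,\alpha$ with $b \in B$, $\beta' \in {\cal B}^I$, $\alpha \in {\cal A}\cap{\cal B}$ would force, after using the $p$-stability of $I$ to kill the $y^n$-contribution from $\beta'$ composed with the affine change of variables, an identity $F_n = u/r$ for some $F_n \in R$ and $u \in R^*$ --- contradicting the non-invertibility of $r$.

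It remains to realize $\beta$ as the head of a reduced $({\cal A},{\cal B})$-decomposition of some $g \in G \cap \langle A^J, T\rangle$. I would look for $g$ of conjugation type $g := \beta\,\alpha\,\tau\,\alpha^{-1}\,\beta^{-1}$ with $\alpha \in A \setminus {\cal B}$ and $\tau \in B \cap {\cal A}^J$ chosen so that all the fractional contributions introduced by $\beta$ and $\beta^{-1}$ cancel, putting $g$ in $G$. In the clean case $n = p^k$, the choice $\alpha := (x,\, rx + y)$ integralizes the $y$-image through the identity $r \cdot r^{-1} = 1$, while a Frobenius-compatible choice such as $\tau := (x + y^n, y)$ forces the remaining $r^{-1}$-terms in the $x$-image to cancel by means of $(A+B)^{p^k} = A^{p^k}+B^{p^k}$; a direct expansion then verifies $g \in G$. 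Since $\beta, \tau \in {\cal A}^J$ and $\alpha \in {\cal A} \subset {\cal A}^J$, one obtains $g \in {\cal A}^J \cap G = A^J \subset \langle A^J, T\rangle$, with reduced decomposition $(\beta, \alpha, \tau, \alpha^{-1}, \beta^{-1})$ of length $5$ having head $\beta$ and tail $\beta^{-1}$ in ${\cal B}$. The main obstacle I anticipate is extending this Frobenius-based cancellation to an arbitrary $n \in J \setminus I$ that is not itself a power of $p$: there one must bring in the extra $y$-exponents that $p$-stability of $J$ forces into $J$ alongside $n$, and check that analogous cancellations still occur in the product defining $g$.
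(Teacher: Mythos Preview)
Your overall strategy---apply Corollary~\ref{normal} with $\mathcal{H}=\mathcal{A}^I$, $t=(y,x)$, and a carefully built $g\in A^J$---is exactly the paper's strategy, and your verification that $\beta\notin B(\mathcal{B}\cap\mathcal{A}^I)$ via $p$-stability of $I$ is correct. The gap is in the construction of $g$.

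Your element $g=\beta\alpha\tau\alpha^{-1}\beta^{-1}$ with $\beta=(x+r^{-1}y^n,y)$ lands in $G$ only through the Frobenius identity $(A+B)^{p^k}=A^{p^k}+B^{p^k}$, and you explicitly flag the case $n\neq p^k$ as an unresolved obstacle. This obstacle is not a technicality: in the very case that motivates the paper---$I=\{p^j:j\ge1\}$ and $J=p\mathbb{N}_+$, i.e.\ $\mathrm{T}^{\rm g}_2(R)$ versus $\mathrm{T}^{\rm d}_2(R)$---the set $J\setminus I$ contains \emph{no} power of $p$, so your construction simply does not apply there. Your suggested repair (``bring in the extra $y$-exponents that $p$-stability of $J$ forces into $J$'') does not come with a cancellation mechanism, and I do not see how $p$-stability of $J$ alone controls the mixed terms of $(y-rx)^n$ well enough to push $g$ into $R[x,y]$.

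The paper avoids this difficulty by choosing $g$ with a length-$3$ (not length-$5$) decomposition and, crucially, with the head $\beta_1$ having coefficients in $R$ rather than in $K\setminus R$. Concretely, with $a\in R$ a nonzero non-unit and any $n\in J\setminus I$, set
\[
\beta_1=(a^2x+y+ay^n,\,y),\qquad \beta_2=(a^{-2}(y-ay^n-x),\,y),\qquad \tau=(y,x),
\]
so that $g:=\beta_1\tau\beta_2$ is a Nagata-type automorphism in $G$ (the $a^{-2}$ is absorbed because $P_1(P_2(y))\equiv y\bmod a^2R[y]$ for $P_1=y+ay^n$, $P_2=y-ay^n$), with no Frobenius hypothesis on $n$. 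Since $\beta_1,\beta_2\in\mathcal{B}\cap\mathcal{A}^J$ and $\tau\in A$, one gets $g\in A^J$. The check $\beta_1\notin B(\mathcal{B}\cap\mathcal{A}^I)$ goes through as in your argument: matching $x$-coefficients forces the $y^n$-coefficient on one side to be $ua^{-1}\notin R$ (with $u\in R^\times$), while $p$-stability of $I$ forces it into $R$ on the other side. Replacing your $\beta$ by this $\beta_1$ closes the gap and actually shortens the proof.
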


\begin{proof}
Since $R$ is not a field there exists $a\in R\pri(\{0\}\cup R^*)$. Since $J\not\subset I$, there exists $n\in J\pri I$.
We consider the polynomials $P_1(y)=y+ay^n$ and $P_2(y)=y-ay^n$ we have $P_1(P_2(y))=y$ mod $a^2R[y]$.
We consider $\beta_1=(a^2x+P_1(y),y)\in{\cal B}\cap{\cal A}^J$, $\beta_2=(a^{-2}(P_2(y)-x),y)\in{\cal B}\cap{\cal A}^J$ and $\tau=(y,x)\in A$.
By a 
well-known fact 
(see for example~\cite{Edo&Venereau}), we have:
$$\sigma:=\beta_1\tau\beta_2=(a^{-2}(P_2(a^2x+P_1(y))-y)\,,\,a^2x+P_1(y))\in A^J.$$
By Corollary~\ref{compatible}, ${\cal H}:={\cal A}^I$ is $({\cal A},{\cal B})$-compatible, we can apply Corollary~\ref{normal} with
$H=A^I$, $t=\tau\in{\cal A}$, $\theta=(\tau)$, $g=\sigma\in A^J$ and $\gamma=(\beta_1,\tau,\beta_2)$,
to deduce $gtg^{-1}\not\in\langle T,A^I\rangle$ and
$\langle T,A^I\rangle$ is not a normal subgroup of $\langle T,A^J\rangle$.
We just need to check that $\beta_1\not\in B({\cal B}\cap{\cal H})$. If $\beta_1\in B({\cal B}\cap{\cal H})$
then $B\beta_1\cap({\cal B}\cap{\cal H})\not=\emptyset$. Since
$$B\beta_1=\{(a^2x+y+ay^n+a^2P(y),y)\,;\,P\in R[y]\}\hspace{.3cm}{\rm and}$$
\begin{align*}
&{\cal B}\cap{\cal H}={\cal A}^I\cap{\cal B}\\
&\quad =\{(ux+P(y),vy+w)\,;\,u,v\in K^*,w\in K,P(y)\in\bigoplus_{i\in I} Ky^i\oplus Ky\oplus K\},
\end{align*}
from $n\not\in I$ and $a\in R\pri(\{0\}\cup R^*)$, we deduce $B\beta_1\cap({\cal B}\cap{\cal H})=\emptyset$.
\end{proof}

\begin{theorem}\label{thm:notnormal}
We assume that $R$ is not a field.
Let ${\cal H}_1,{\cal H}_2$ be two affine type subgroups of ${\cal G}$ such that ${\cal H}_1\subsetneqq{\cal H}_2$.
We set $H_1={\cal H}_1\cap G$ and $H_2={\cal H}_2\cap G$. Then $\langle H_1,T\rangle$ is a proper subgroup of $\langle H_2,T\rangle$
which is not normal.
\end{theorem}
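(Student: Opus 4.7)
The plan is to reduce Theorem~\ref{thm:notnormal} to the combination of Corollaries~\ref{incressing} and~\ref{cor:notnormal}, which together already carry out the substantive work. First, since ${\cal H}_1$ and ${\cal H}_2$ are affine type subgroups of ${\cal G}$, I would write ${\cal H}_1={\cal A}^{I_1}$ and ${\cal H}_2={\cal A}^{I_2}$ for suitable $p$-stable subsets $I_1,I_2\subset\N_{\ge 2}$. With the notation $A^{I_i}={\cal A}^{I_i}\cap G$ introduced earlier, this gives $H_i=A^{I_i}$ and hence $\langle H_i,T\rangle=\langle A^{I_i},T\rangle$ for $i=1,2$.

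Second, I would translate the hypothesis ${\cal H}_1\subsetneqq{\cal H}_2$ into a relation on the index sets. Corollary~\ref{incressing} asserts that inclusion of affine type subgroups is equivalent to inclusion of the associated $p$-stable subsets, so the strict inclusion forces $I_1\subsetneqq I_2$. The inclusion $I_1\subset I_2$ yields $A^{I_1}\subset A^{I_2}$ and hence $\langle H_1,T\rangle\subset\langle H_2,T\rangle$, while the strictness ensures $I_2\not\subset I_1$.

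Third, I would apply Corollary~\ref{cor:notnormal} with $I=I_1$ and $J=I_2$: since $I_2\not\subset I_1$ and $R$ is not a field, $\langle A^{I_1},T\rangle$ fails to be a normal subgroup of $\langle A^{I_2},T\rangle$. Properness of the inclusion then comes for free, as any subgroup is normal in itself. I do not foresee any real obstacle; the whole thrust of the argument---the explicit construction of a witness $\sigma=\beta_1\tau\beta_2\in A^{I_2}$ whose conjugate of $\tau$ escapes $\langle A^{I_1},T\rangle$---is already performed inside Corollary~\ref{cor:notnormal} (which in turn rests on Corollaries~\ref{compatible} and~\ref{normal}), and the present theorem is essentially a clean repackaging of that result through the order-preserving correspondence of Corollary~\ref{incressing}.
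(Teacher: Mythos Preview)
Your proposal is correct and follows exactly the paper's own proof: write ${\cal H}_i={\cal A}^{I_i}$, use Corollary~\ref{incressing} to convert ${\cal H}_1\subsetneqq{\cal H}_2$ into $I_1\subsetneqq I_2$, and then invoke Corollary~\ref{cor:notnormal}. You even make explicit the small observation (left implicit in the paper) that non-normality forces properness.
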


\begin{proof}
Let $I$ (resp.\ $J$) be a $p$-stable subset of $\N_{\ge 2}$ such that ${\cal H}_1={\cal A}^I$ (resp.\ ${\cal H}_2={\cal A}^J$).
By Corollary~\ref{incressing}, ${\cal H}_1\subsetneqq{\cal H}_2$ implies $I\subsetneqq J$
and we can conclude using Corollary~\ref{cor:notnormal}.
\end{proof}

{\bf D. The length~3 differentially tame automorphisms}\\

With the notations introduced in the previous subsection, we focus on the case $I=p\N_+$ and we set
${\cal H}={\cal A}^I={\rm Aff}^{\rm d}_2(K)$ and $H={\cal H}\cap G$.\\

Let $a\in R\pri\{0\}$ be a nonzero element and let $P(y),Q(y)\in R[y]$ be two polynomial such that $P(Q(y))=y$ mod $aR[y]$.
We consider $\beta_1=(ax+P(y),y)\in{\cal B}$, $\beta_2=(a^{-1}(Q(y)-x),y)\in{\cal B}$ and $\tau=(y,x)\in A$.
By a well-known fact 
(see for example~\cite{Edo&Venereau}), we have:
$$\sigma({a,P,Q})=\beta_1\tau\beta_2=(a^{-1}(Q(ax+P(y))-y)\,,\,ax+P(y))\in G.$$
Let $B_0$ be the subgroup of $B$ defined by $B_0=\{(x+S(y)\,;\,S(y)\in R[y]\}$. This subgroup is such that
$B=(A\cap B)B_0=(A\cap B)B_0$. The double class of $\sigma({a,P,Q})$ modulo $B_0$ is easy to compute:
$$B_0\sigma({a,P,Q})B_0=\{\sigma({a,P_1,Q_1})\,;\,P_1=P{\rm\ mod\ }aR[y]\,,\,Q_1=Q{\rm\ mod\ }aR[y] \}$$
We recall (see for example [EV]) that $\sigma({a,P,Q})\in T$ if and only if there exist two constants $b\in R^*$ and $c\in R$ such that $P(y)=by+c$ mod $aR[y]$,
in other words if $B_0\sigma({a,P,Q})B_0\cap A\not=\emptyset$.
On the other hand, we have
$$J(\sigma({a,P,Q}))=
\left(
  \begin{array}{cc}
    Q'(ax+P(y)) & a^{-1}(P'(y)Q'(ax+P(y))-1) \\
    a & P'(y)
  \end{array}
\right)$$
and we deduce that $\sigma({a,P,Q})\in H$ if and only if $P'(y),Q'(y)\in R^*$.
In this situation, using Corollary~\ref{criterion}, we can prove a characterisation for elements in $\langle H,T\rangle$:

\begin{corollary}\label{ex}
We have: $\sigma({a,P,Q})\in\langle H,T\rangle$ if and only if there exists $b\in R^*$ such that
$P'(y)=b{\rm\ mod\ }aR[y]$.
\end{corollary}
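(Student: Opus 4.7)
The plan is to apply Corollary~\ref{criterionlg3} to $h=\sigma(a,P,Q)$. One first verifies that $(\beta_1,\tau,\beta_2)$ is a reduced $({\cal A},{\cal B})$-decomposition of $\sigma$ of length~$3$ whenever $\deg P\ge 2$ and $\deg Q\ge 2$: indeed $\beta_1,\beta_2\in{\cal B}\setminus{\cal A}$, $\tau\in{\cal A}\setminus{\cal B}$, and the products $\beta_1\tau=(y,ax+P(y))$ and $\tau\beta_2=(a^{-1}(Q(x)-y),x)$ both lie outside ${\cal A}\cup{\cal B}$. In the remaining case $\deg P\le 1$ or $\deg Q\le 1$, Remark~\ref{lgle3} forces $\sigma\in T\subset\langle H,T\rangle$, and the characterization of $T$-membership recalled before the Corollary gives $P\equiv by+c\bmod aR[y]$ with $b\in R^*$, whence $P'\equiv b\bmod aR[y]$; so the Corollary holds trivially and we may henceforth assume $\ell(\sigma)=3$. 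Since $A\subset{\cal A}\subset{\cal H}$, Corollary~\ref{criterionlg3} applies and reduces the statement to $\sigma\in B{\cal H}B$.

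Next, from $B=B_0(A\cap B)$ together with $(A\cap B)\subset{\cal A}\subset{\cal H}$ and the fact that ${\cal H}$ is a group, I get $B{\cal H}B=B_0{\cal H}B_0$; and since $\sigma, b_1, b_2\in G$ whenever $b_1,b_2\in B_0$, any element $b_1^{-1}\sigma b_2^{-1}$ lies in $G\cap{\cal H}=H$, so the condition becomes $\sigma\in B_0HB_0$. A direct computation shows that for $b_i=(x+S_i(y),y)\in B_0$,
$$b_1^{-1}\,\sigma(a,P,Q)\,b_2^{-1}=\sigma\bigl(a,\,P-aS_1,\,Q-aS_2\bigr),$$
recovering the double-coset description $B_0\sigma B_0=\{\sigma(a,P_1,Q_1):P_1\equiv P,\,Q_1\equiv Q\bmod aR[y]\}$ quoted from the paper. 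Therefore $\sigma\in B_0HB_0$ iff there exist $P_1\equiv P$ and $Q_1\equiv Q\bmod aR[y]$ with $\sigma(a,P_1,Q_1)\in H$, which by the Jacobian-based characterization of $H$ recalled just before the Corollary is equivalent to $P_1'(y),Q_1'(y)\in R^*$.

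For the only-if direction, such $P_1$ gives $b:=P_1'\in R^*$ and $P'(y)\equiv P_1'(y)=b\bmod aR[y]$. For the if direction, assume $P'(y)\equiv b\bmod aR[y]$ with $b\in R^*$. In characteristic~$p$, every $f\in R[y]$ with $f'\in aR[y]$ has the form $f=c+\tilde f(y^p)+aR_0(y)$: the coefficient of $y^i$ for $i\ge 1$ with $p\nmid i$ must lie in $aR$ (as $i\in R^*$), while monomials $y^{pj}$ are unrestricted. Applied to $P(y)-by$, this yields $P=by+c+\tilde P(y^p)+aR_0$; set $P_1:=P-aR_0=by+c+\tilde P(y^p)$, so $P_1\equiv P\bmod aR[y]$ and $P_1'=b\in R^*$. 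Differentiating $P(Q(y))\equiv y\bmod aR[y]$ yields $P'(Q(y))Q'(y)\equiv 1\bmod aR[y]$, whence $Q'(y)\equiv b^{-1}\bmod aR[y]$; the same construction gives $Q_1\equiv Q\bmod aR[y]$ with $Q_1'=b^{-1}\in R^*$. Thus $\sigma(a,P_1,Q_1)\in H$ and
$$\sigma(a,P,Q)\in B_0\cdot\sigma(a,P_1,Q_1)\cdot B_0\subset T\cdot H\cdot T\subset\langle H,T\rangle.$$
The main obstacle is precisely this characteristic-$p$ structural step: from a congruence $P'(y)\equiv b\bmod aR[y]$ one must exhibit an explicit element of $aR[y]$ whose removal from $P$ produces a polynomial whose derivative is \emph{exactly} the unit $b$, not merely congruent to it modulo~$a$.
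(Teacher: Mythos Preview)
Your proof is correct and follows essentially the same approach as the paper: both reduce the question to membership in $B_0{\cal H}B_0$ via the double-coset description of $B_0\sigma B_0$, and both handle the ``if'' direction by explicitly modifying $P,Q$ modulo $aR[y]$ so that their derivatives become units. The only difference is cosmetic: the paper invokes Corollary~\ref{criterion} (part~1) directly to get $\beta_1\in B({\cal B}\cap{\cal H})$, whereas you invoke the packaged biconditional Corollary~\ref{criterionlg3} (itself a consequence of Corollary~\ref{criterion}); your version is in fact slightly more careful, since you explicitly treat the degenerate case $\deg P\le1$ or $\deg Q\le1$ (where $(\beta_1,\tau,\beta_2)$ fails to be reduced) and you spell out the characteristic-$p$ decomposition needed to pass from $P'\equiv b\bmod aR[y]$ to an actual $P_1$ with $P_1'=b$.
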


\begin{proof} To check if $\sigma({a,P,Q})\in\langle H,T\rangle$ or not, we can change $\sigma({a,P,Q})$ to any element
in the double class $B_0\sigma({a,P,Q})B_0$. 
In this way, 
we can assume that all the coefficients of $P$ and $Q$ which are
$0$ modulo $a$ are in fact $0$.\\
$\circ$ If there exists a constant $b\in R^*$ such that $P'(y)=b$ mod $aR[y]$, then
we can assume $P'(y)=b$. Since $P(Q(y))=y$, this implies $Q'(y)=b^{-1}$ mod $aR[y]$ and then $Q'(y)=b^{-1}$.
We deduce $\beta_1,\beta_2\in{\cal H}$ and $\sigma({a,P,Q})\in H$.\\
$\circ$ Conversely, we assume $\sigma({a,P,Q})\in\langle H,T\rangle$.
We apply Corollary~\ref{criterion} to $h=\sigma({a,P,Q})$ and $(\beta_1,\tau,\beta_2)$ and $i=1$,
we obtain $\beta_1\in B_0({\cal B}\cap{\cal H})$ and $B_0\beta_1\cap{\cal H}\not=\emptyset$.
That means there exist
$P_1,Q_1\in R[y]$ such that $P_1={\rm\ mod\ }aR[y]$,  $Q_1=Q{\rm\ mod\ }aR[y]$ and $\sigma({a,P_1,Q_1})\in H$
and then there exists $b\in R^*$ such that $P_1'(y)=b$ and $P'(y)=b{\rm\ mod\ }aR[y]$.
\end{proof}

\section{Acknowledgments}
\setcounter{equation}{0}

This work is partially supported
by the Grant-in-Aid for Scientific Research (B) 24340006,
and the Grant-in-Aid for Young Scientists (B) 24740022,
Japan Society for the Promotion of Science.

\bigskip 

\begin{flushleft}
E. Edo\\
ERIM, \\
University of New Caledonia\\
BP R4 - 98851\\
Noum\'ea CEDEX\\
New Caledonia \\
{\tt eric.edo@univ-nc.nc} \\

\bigskip 

S. Kuroda\\
Department of Mathematics and Information Sciences\\
Tokyo Metropolitan University\\
1-1 Minami-Osawa, Hachioji \\
Tokyo 192-0397, Japan \\
{\tt kuroda@tmu.ac.jp}
\end{flushleft}


\begin{thebibliography}{99}

\bibitem{Edo&Venereau} E. Edo, S. V\'{e}n\'{e}reau,
{\it Length 2 variables of} $A[x,y]$ {\it and transfer},
Polynomial automorphisms and related topics (Krak\`{o}w, 1999).
Ann. Polon. Math. {\bf 76} (2001), no. 1-2, 67--76.

\bibitem{van den Essen} A. van den Essen, {\textsl{Polynomial Automorphisms
and the Jacobian Conjecture, }}Birkhauser Verlag, Basel-Boston-Berlin (2000).

\bibitem{Jung}H.~Jung,
\"Uber ganze birationale Transformationen der Ebene,
J.\ Reine Angew.\ Math.\ {\bf 184} (1942), 161--174.

\bibitem{Kulk}W.~van der Kulk,
On polynomial rings in two variables,
Nieuw Arch.\ Wisk. (3) {\bf 1} (1953), 33--41.

\bibitem{Kuroda}S. Kuroda, 
Elementary reducibility of automorphisms of a vector group, 
Saitama Math. J. {\bf 29} (2012), 79--87. 


\bibitem{Magnus} W. Magnus, A. Karass, D. Solitar, {\it Combinatorial group theory},
Dover Publications, Inc., New York (1976).

\bibitem{Nagata}M.~Nagata,
On Automorphism Group of $k[x,y]$,
Lectures in Mathematics, Department of Mathematics,
Kyoto University, Vol.\ 5,
Kinokuniya Book-Store Co.\ Ltd., Tokyo, 1972.


\bibitem{SU}
I.~Shestakov and U.~Umirbaev,
The tame and the wild automorphisms of polynomial rings in three variables,
J.\ Amer.\ Math.\ Soc.\ {\bf 17} (2004), 197--227.


\bibitem{Tanaka}
T. Tanaka and H. Kaneta,
Automorphism groups of vector groups over
a field of positive characteristic, Hiroshima Math. J.
{\bf 38} (2008), 437--446.

\end{thebibliography}
\end{document}